\newcommand{\bburl}[1]{\textcolor{blue}{\url{#1}}}
\newtheorem{thm}{Theorem}[section]
\newtheorem{lem}[thm]{Lemma}
\newtheorem{defi}[thm]{Definition}
\newtheorem{rek}[thm]{Remark}
\newcommand\ben{\begin{enumerate}}
\newcommand\een{\end{enumerate}}
\newcommand{\mattwo}[4]
{\left(\begin{array}{cc}
                        #1  & #2   \\
                        #3 &  #4
                          \end{array}\right) }
\newcommand{\proofcase}{%
  \ifusedcase\else\usedcasetrue\stepcounter{case}\fi
  \par
  \refstepcounter{proofcase}
  \everypar=\expandafter{\the\everypar{\setbox0=\lastbox}\everypar{}Case \theproofcase\ }%
}
\newcommand{\R}{\mathbb{R}}
\newcommand{\gep}{\epsilon}  %lowercase epsilon
\newcommand{\gl}{\lambda}    %lowercase lambda
\newcommand{\twocase}[5]{#1 \begin{cases} #2 & \text{\rm #3}\\ #4
&\text{\rm #5} \end{cases}  }
\newcommand\be{\begin{equation}}
\newcommand\ee{\end{equation}}
\newcommand\bea{\begin{eqnarray}}
\newcommand\eea{\end{eqnarray}}
\newcommand{\foh}{\frac{1}{2}}  %onehalf
\newcommand{\hkpn}{H_k^+(N)}
\newcommand{\hkmn}{H_k^-(N)}
\newcommand{\hkpmn}{H_k^\pm(N)}
\newcommand{\hkn}{H_k^\ast(N)}
\newcommand{\phir}[1]{\widehat{\phi_j}\left( \frac{ \log p_{#1} }{\log R}\right) }
\newcommand{\pfrac}[1]{\frac{2\log p_{#1}}{\sqrt{p_{#1}} \log R}}
\newcommand{\hphi}{\widehat{\phi}}  %phi^
\newcommand{\phihat}{\widehat{\phi}} %phi^
\newcommand{\hpsi}{\widehat{\psi}}  %psi^
\newcommand{\hvarphi}{\widehat{\varphi}}  %varphi^
\newcommand{\supp}{\operatorname{supp}}
\newcommand{\notdiv}{\nmid}
\newcommand{\intinf}{\int_{-\infty}^\infty}
\newcommand{\I}{1\!\!1} % indicator function
\renewcommand{\i}{{\mathrm{i}}} % sqrt -1
\renewcommand{\d}{{\mathrm{d}}} % differential, used in integrals
\renewcommand{\Re}{{\mathfrak{Re}}}
\newcommand{\<}{\left\langle}
\renewcommand{\>}{\right\rangle}
\newcommand{\nc}{\newcommand}
\nc{\Symp}{\mathsf{Sp}}
\nc{\SpOrthO}{\mathsf{SO(odd)}}
\nc{\SpOrthE}{\mathsf{SO(even)}}
\nc{\Orth}{\mathsf O}
\nc{\Unit}{\mathsf U}
\nc{\UnitSp}{\mathsf{USp}}
\nc{\N}{\mathbb N}
\nc{\DD}{\mathbb D}
\nc{\TT}{\mathbb T}
\nc{\EE}{\mathbb E}
\nc{\cT}{\mathcal T}
\nc{\cP}{\mathcal P}
\nc{\cM}{\mathcal M}
\nc{\cC}{\mathcal C}
\nc{\cB}{\mathcal B}
\nc{\cG}{\mathcal G}
\nc{\cA}{\mathcal A}
\nc{\cS}{\mathcal S}
\nc{\cF}{\mathcal F}
\nc{\cL}{\mathcal L}
\nc{\cR}{\mathcal R}
\numberwithin{equation}{section}
\numberwithin{part}{section}
\title
{
	\textsc{Bounding Vanishing at the Central Point of Cuspidal Newforms}
	%%%Consequences of Results towards the Katz-Sarnak Density Conjectures for Zeros of $L$-Functions and Applications to Bounding Ranks
}
\author[Li]{Jiahui (Stella) Li}
\email{\textcolor{blue}{\href{mailto:stellali110204@gmail.com}{stellali110204@gmail.com}}}
\address{Emma Willard School, Troy, NY 12180}
\author[Miller]{Steven J. Miller}
\email{\textcolor{blue}{\href{mailto:sjm1@williams.edu}{sjm1@williams.edu}},  \textcolor{blue}{\href{Steven.Miller.MC.96@aya.yale.edu}{Steven.Miller.MC.96@aya.yale.edu}}}
\address{Department of Mathematics and Statistics, Williams College, Williamstown, MA 01267}
\date{\today}
\subjclass[2010]{11Mxx (primary); 45Bxx (secondary)}
\keywords{Random matrix theory, $L$-functions, low-lying zeros, central point}
\thanks{We thank the participants of the 2019, 2020 and 2021 Williams College SMALL REU Program for helpful comments on earlier drafts, and attendees at the 2021 Recent Developments in Number Theory Conference. We also thank the referee for many helpful comments.}
\begin{document}

%\begin{titlepage}

\maketitle

\thispagestyle{empty}

\begin{abstract}
The Katz-Sarnak Density Conjecture states that zeros of families of $L$-functions are well-modeled by eigenvalues of random matrix ensembles. For suitably restricted test functions, this correspondence yields upper bounds for the families' order of vanishing at the central point. We generalize results on the $n$\textsuperscript{th} centered moment of the distribution of zeros to arbitrary test functions. On the computational side, we use our improved formulas to obtain significantly better bounds on the order of vanishing for cuspidal newforms, setting records for the quality of the bounds. We also discover better test functions that further optimize our bounds. We see improvement as early as the $5$\textsuperscript{th} order, and our bounds improve rapidly as the rank grows (more than one order of magnitude better for rank 10 and more than four orders of magnitude for rank 50).
\end{abstract}

%%%\tableofcontents
%\end{titlepage}

%%%%%%%%%%%%%%%%%%%%%%%%%%%%%%%%%%%%%%%%%%%%%%%%%%%%%%%%%%%%%%%%%%%%%%%%%%%%%%%%%%%%%%%%%%%%%%%%%%%%%%%%%%%%%%%%%%%%%%%%%%%%%%%%%%%%%%%%%%%%%%%%%%%%%%%%%%%%%%%%
%%%%%%%%%%%%%%%%%%%%%%%%%%%%%%%%%%%%%%%%%%%%%%%%%%%%%%%%%%%%%%%%%%%%%%%%%%%%%%%%%%%%%%%%%%%%%%%%%%%%%%%%%%%%%%%%%%%%%%%%%%%%%%%%%%%%%%%%%%%%%%%%%%%%%%%%%%%%%%%%
%%%%%%%%%%%%%%%%%%%%%%%%%%%%%%%%%%%%%%%%%%%%%%%%%%%%%%%%%%%%%%%%%%%%%%%%%%%%%%%%%%%%%%%%%%%%%%%%%%%%%%%%%%%%%%%%%%%%%%%%%%%%%%%%%%%%%%%%%%%%%%%%%%%%%%%%%%%%%%%%
\section{Introduction}
%%%%%%%%%%%%%%%%%%%%%%%%%%%%%%%%%%%%%%%%%%%%%%%%%%%%%%%%%%%%%%%%%%%%%%%%%%%%%%%%%%%%%%%%
%%%%%%%%%%%%%%%%%%%%%%%%%%%%%%%%%%%%%%%%%%%%%%%%%%%%%%%%%%%%%%%%%%%%%%%%%%%%%%%%%%%%%%%%
\subsection{Background}

Since the success of the Riemann zeta function $\zeta(s)$ in converting questions on primes to related ones on integers, a zoo of generalizations have emerged where local data leads to a global object whose properties help resolve questions of interest. Dirichlet $L$-functions, for example, are used to study primes in arithmetic progression. They have an Euler product of degree $1$. With Euler products of degree $2$, cuspidal newforms are the next simplest. One of the best examples of cuspidal newforms is the $L$-function associated with an elliptic curve, which is used to study the Mordell-Weil group (the group of rational solutions of an elliptic curve). 

We briefly describe the main number theory objects of study, cuspidal newforms. For details see \cite{ILS, IK}; for our purposes, all that we need are that these objects are generalizations of the Riemann zeta function and of interest to number theorists. 

Modular forms are functions of a complex variable from the upper half plane to the complex numbers which have decay properties (we will elaborate in detail shortly) and transform nicely under congruence groups, which are subsets of $2 \times 2$ matrices with integer entries and determinant 1. In particular, if $\gamma$ is such a matrix, say $\gamma = \mattwo{a}{b}{c}{d}$ with $ad - bc = 1$, we set $\gamma z = \frac{az + b}{cz  + d}$. A simple calculation shows that if $z$ is in the upper half plane (so ${\rm Im}(z) > 0$), so too is $\gamma z$. Modular forms are generalizations of periodic functions; instead of requiring $f(z+1) = f(z)$ we require $f(\gamma z) = (cz +d)^{2k} f(z)$ (plus boundedness as the imaginary part of $z$ tends to infinity; see \cite{IK} for details). In particular, if we take $\gamma = \mattwo{1}{1}{0}{1}$ we find $f(\gamma z) = f(z+1) = f(z)$ as $cz + d = 1$ here, which explains why these are viewed as generalizations of periodic functions. The difference is that we now require our function to transform well under other shifts. The modular forms of level $N$ transform nicely under such matrices with the additional constraint that $c \equiv 0 \bmod N$; if $N=1$ one can show the set of possible matrices are generated by $\mattwo{1}{1}{0}{1}$, which sends $z$ to $z+1$, and $\mattwo{0}{-1}{1}{0}$, sending $z$ to $-1/z$. Note if a form is of level $N$ it is trivially of form level $M$ for any $M$ divisible by $N$. We say a form is a newform of level $N$ if it is not a form of any smaller level.

We collect these requirements, and some properties, in the definition below. We study cuspidal newforms, which are modular forms that tend to zero as the imaginary part tends to infinity; thus, in the series expansions the constant term is zero. 

\begin{defi}[Cuspidal Newforms]\label{def: Dirichlet L-functions}
Let $H^\star_k(N)$ denote the set of holomorphic cusp forms of weight $k$ that are newforms of level $N$. For every $f\in H^\star_k(N)$, we have a Fourier expansion
\begin{equation}
f(z)\ =\ \sum_{n=1}^\infty a_f(n) e(nz).
\end{equation}
We set $\lambda_f(n) =  a_f(n) n^{-(k-1)/2}$, and obtain the $L$-function associated to $f$ 
\begin{equation}
L(s,f)\ =\ \sum_{n=1}^\infty \lambda_f(n) n^{-s}.\label{eq: L-function}
\end{equation}
The completed $L$-function is
\begin{equation}\label{eq:completed_L_func}
\Lambda(s,f) \ =\ \left(\frac{\sqrt{N}}{2\pi}\right)^s
\Gamma\left(s+\frac{k-1}{2}\right) L(s,f).\end{equation}
Since $\Lambda(s,f)$ satisfies the functional equation $\Lambda(s,f) =
\epsilon_f \Lambda(1-s,f)$ with $\epsilon_f = \pm 1$, $H^\star_k(N)$ splits into two disjoint subsets, $H^+_k(N) = \{
f\in H^\star_k(N): \epsilon_f = +1\}$ and $H^-_k(N) = \{ f\in
H^\star_k(N): \epsilon_f = -1\}$.

The associated symmetry group of $H^\star_k(N)$ is Orthogonal \emph{(O)}, $H^+_k(N)$ is Special Orthogonal even \emph{SO(even)}, and $H^-_k(N)$ is Special Orthogonal odd \emph{SO(odd)}. 

Note that \emph{SO(even)($N$)} is the scaling limit of \emph{SO($2N$)}, and \emph{SO(odd)($N$)} is the scaling limit of \emph{SO($2N+1$)}.
\end{defi}

The Generalized Riemann Hypothesis (GRH) asserts that all non-trivial zeros of $L$-functions are located on the critical line $\Re(s) = 1/2$. The special case for $\zeta(s)$ yields an error term in the prime number theorem roughly on the order of square-root cancellation. More generally, the GRH enables us to investigate many statistics of the normalized zeros as we can now order them on the critical line. One of the first studied is the $n$-level correlation; see \cite{Hej, Mon, RS}.

\begin{defi}[$n$-level Correlation]\label{def: n-level correlation} Given an increasing sequence $\{\alpha_n\}_{n=1}^\infty$ and a box $B$ $\subset \mathbb{R}^{n-1}$, the $n$-level correlation is defined by
\begin{equation}\label{eq: n-level correlation}
    \lim_{N \to \infty} \frac{\#\{(\alpha_{j_1}-\alpha_{j_2},\dots,\alpha_{j_{N-1}}-\alpha_{j_N})\in B, j_i \neq j_k\}}{N}.
\end{equation}
The pair correlation is the case when $n=2$.
\end{defi}

If we know all the $n$-level correlations, through combinatorics and inclusion / exclusion we can pass to the distribution of gaps between adjacent zeros. While these calculations cannot be done in general, they are well-modeled by random matrix theory. That subject began with Wishart's \cite{Wis} work in the 1920's with applications in statistics, and has since been successfully applied in diverse and complex systems such as number theory, nuclear physics, quantum mechanics, and wireless communications; see for example \cite{BFMT-B, Con, FM, Ha}.

In particular, Montgomery's pair correlation conjecture asserts that the pair correlation between pairs of non-trivial zeros of the Riemann zeta function (normalized to have unit average spacing) is the same as the pair correlation of eigenvalues of random matrices in the Gaussian Unitary Ensemble; Dyson noted that the same functional form arises in random matrix theory, and a similar one models energy levels of heavy nuclei. Odlyzko \cite{Odl1, Odl2} studied millions of normalized zeros near the $10^{20\text{th}}$ and the $10^{22\text{nd}}$ zero of $\zeta(s)$, and these tests reflect remarkable agreements: the spacings between normalized zeros of $\zeta(s)$ far up on the critical line appear indistinguishable from those between the eigenvalues of the Gaussian Unitary Ensemble (GUE). However, the $n$-level correlation have significant limitations, which we address in the next subsection with the introduction of another statistic.

%%%%%%%%%%%%%%%%%%%%%%%%%%%%%%%%%%%%%%%%%%%%%%%%%%%%%%%%%%%%%%%%%%%%%%%%%%%%%%%%%%%%%%%%
%%%%%%%%%%%%%%%%%%%%%%%%%%%%%%%%%%%%%%%%%%%%%%%%%%%%%%%%%%%%%%%%%%%%%%%%%%%%%%%%%%%%%%%%
\subsection{Level Densities}

The pair correlation shows an astonishing uniformity: we see the same behavior not just for eigenvalues of complex Hermitian matrices but also for real symmetric, and more importantly for number theory, for the classical compact groups (unitary, orthogonal and symplectic). The reason is that the $n$-level correlations only concern the asymptotic behavior of the distribution of zeros and is, therefore, insensitive to the behavior of finitely many zeros. On the number theory side, Rudnick and Sarnak \cite{RS} showed for suitably restricted test functions the $n$-level correlations of automorphic forms are the same, with the universality due to the first two moments of the Satake parameters. Thus, while initially one thought that zeros were well modeled by the Gaussian Unitary Ensemble, their work showed that any of the above matrix families give the same answer in the limit far from the central point. There is thus a need for a statistic which is sensitive to the behavior near $s = 1/2$, which is the most important point to study inside the critical strip (for example, the Birch and Swinnerton-Dyer conjecture asserts that the order of vanishing of the $L$-function here equals the geometric rank of the Mordell-Weil group).

Although the classical compact groups have identical $n$-level correlation over all their eigenvalues as the matrix size tends to infinity, they have distinct behavior of their eigenvalues near 1. At height $T$, the spacing between adjacent zeros of $L$-functions is on the order of $1/\log(T)$, so one $L$-function provides enough zeros for averaging purposes high up on the line. This is very different near the central point, where we have only finitely many zeros and different behavior here is not drowned out by the infinitely many zeros further up. Thus, in order to differentiate among different classical compact groups and study zeros near the central point, we use another statistic which is sensitive to the behavior of zeros near the central point.

Before stating this new statistic, we review two needed concepts. A function $\phi$ is Schwartz if it is infinitely differentiable and it and all of its derivatives decay faster than any polynomial; explicitly, for any non-negative integers $k$ and $m$ there is an integer $C_{k,m}$ such that \begin{equation}
    |\phi^{(k)}(x)| \ \le \ \frac{C_{k,m}}{(1 + x^2)^m}.\end{equation} The Fourier transform of $\phi$, denote $\widehat{\phi}$, is defined by \begin{equation}
    \widehat{\phi}(y) \ := \ \int_{-\infty}^\infty \phi(x) e^{-2\pi i x y} dx.
\end{equation}

In the arguments below we assume that the Generalized Riemann Hypothesis holds for each $L(s, f)$, that is, we can enumerate the non-trivial zeros of cuspidal newforms $L(s, f)$ of level $N$ and weight $k$ (see \cite{IK}) by \[ \rho^{(j)}_f \ = \  \frac12 + i \gamma_f^{(j)} \] for $\gamma_f^{(j)} \in \R$ increasingly ordered and centered about zero. The number of zeros with $|\gamma_f^{(j)}|$ bounded by an absolute large constant is of order $\log c_f$ for some constant $c_f > 1$; this is known as the \emph{analytic conductor}. It is important to note that the quantities below still exist even if GRH fails; in that case, however, as the zeros are no longer lying on a line we lose the interpretation of talking about spacings between adjacent zeros, though our assumptions on the test function ensures that the sum still converges and makes sense. Additionally, if GRH fails, we cannot use the $n$-level statistics to bound the order of vanishing at the central point, as those arguments crucially depend on the zeros lying on the critical line.

\begin{defi}[$n$-Level Density]
The $n$-level density of an $L$-function $L(s,f)$ is defined as
\begin{equation}
    D_n (f; \Phi) \ := \  \sum_{\substack{j_1, \dots, j_n \\ j_i \neq \pm j_k}} \Phi \left( \frac{\log c_f}{2\pi} \gamma_f^{(j_1)}, \dots, \frac{\log c_f}{2\pi} \gamma_f^{(j_n)} \right)	\label{def:density}
\end{equation}
for a \emph{test function} $\Phi: \R^n \to \R$, For many applications we assume $\Phi$ is a non-negative even Schwartz function with compactly supported Fourier transform and $\Phi(0, \dots, 0) > 0$. 

We will refer to the one-level density as $D (f; \Phi)$.
\end{defi}

\begin{rek}
Unlike the $n$-level correlations, the sum \eqref{def:density} is very hard to study for an individual $f$ because by choice of $\Phi$ it essentially captures only a bounded number of zeros. Instead, we study averages over finite subfamilies $\cF (Q) := \{ f \in \cF : c_f \leq Q \}$ (which are parametrized by some quantity such that as that tends to infinity, the size of the subfamily tends to infinity as well), namely
\begin{equation}
    \EE (D_n (f; \Phi), Q) \ := \  \frac{1}{\# \cF (Q)} \sum_{f \in \cF (Q)} D_n(f; \Phi). \label{eq:averagessecond}
\end{equation}
\end{rek}

The Katz-Sarnak density conjecture \cite{KS1,KS2} asserts that the $n$-level density of a family of $L$-function is only dependent on the symmetry group attached to the family, and converges to the $n$-level density of eigenvalues of a classical compact group. Explicitly,  if $\cF$ is a ``good'' family of $L$-functions, then there exists a distribution $W_{n, \cF}$ such that
\begin{eqnarray}
    & & \lim_{Q \to \infty} \EE (D_n (f; \Phi), Q) = \  \frac{1}{\Phi(0, \dots, 0)} \int_{\R^n} \Phi(x_1, \dots, x_n) W_{n, \cF} (x_1, \dots, x_n) dx_1 \cdots dx_n.
\end{eqnarray}

\begin{thm} [Determinant Expansion \cite{KS1}] Let $K(x) = \frac{\sin{\pi y}}{\pi y}$ and $K_{\epsilon}(x) = K(x-y)+\epsilon K(x+y)$. Then the $n$-level densities have the following distinct closed form determinant expansions for each corresponding symmetry group:
\begin{align}
	W_{n, \SpOrthE} (x) 	
		&\ = \  \det \left( K_1 (x_i, x_j) \right)_{i, j \leq n}, \label{eq:nlevelSOeven} \\
	W_{n, \SpOrthO} (x)
		&\ = \  \det \left( K_{-1} (x_i, x_j) \right)_{i, j, \leq n} + \sum_{k  =  1}^n \delta (x_k) \det \left( K_{-1} (x_i, x_j) \right)_{i, j, \neq k},  \label{eq:nlevelSOodd}\\
	W_{n, \Orth} (x)
		&\ = \  \frac12 W_{n, \SpOrthE} (x) + \frac12 W_{n, \SpOrthO} (x), \label{eq:nlevelSpOrthE}\\
	W_{n, \Unit} (x)
		&\ = \  \det \left( K_0 (x_i, x_j) \right)_{i, j, \leq n}, \label{eq:nlevelUni}\\
	W_{n, \Symp} (x)			
		&\ = \ \det \left( K_{-1} (x_i, x_j) \right)_{i, j, \leq n}.\label{eq:nlevelSymp}
\end{align}
\end{thm}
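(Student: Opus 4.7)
The plan is to derive each formula directly from the joint eigenvalue distribution on the corresponding classical compact group, realize the $n$-point correlation function as a (Pfaffian or) determinantal point process, and take the scaling limit near the eigenvalue $1$. Throughout I would use the Weyl integration formula for each group, so that computing the limiting $n$-level density on the matrix side reduces to an integral against an explicit joint density of eigenangles.

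For the unitary case \eqref{eq:nlevelUni}, the Weyl density on $\Unit(N)$ is proportional to the squared Vandermonde in $e^{i\theta_j}$, which places the eigenangles in a determinantal point process with Christoffel--Darboux kernel
\[ K_N^{\Unit}(\theta,\phi) \ = \ \sum_{j=0}^{N-1} e^{ij(\theta-\phi)} \ = \ \frac{\sin\bigl(N(\theta-\phi)/2\bigr)}{\sin\bigl((\theta-\phi)/2\bigr)}. \]
Rescaling $\theta = 2\pi x/N$ and sending $N\to\infty$ converts $K_N^{\Unit}$ into the sine kernel $K_0(x-y) = K(x-y)$, and the standard determinantal identity $\rho_n(x_1,\dots,x_n) = \det(K_0(x_i-x_j))_{i,j\le n}$ yields \eqref{eq:nlevelUni} after integrating against the test function.

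For $\Symp$, $\SpOrthE$ and $\SpOrthO$ the eigenvalues come in complex-conjugate pairs $\{e^{i\theta},e^{-i\theta}\}$, so the Weyl measure lives on $[0,\pi]^N$ and its density is a Vandermonde in $2\cos\theta_j$ times a Jacobian factor $\prod \sin^{a}(\theta_j/2)\cos^{b}(\theta_j/2)$ whose exponents $(a,b)$ distinguish the three ensembles. Expanding this weighted Vandermonde as a Gram determinant in the appropriate Chebyshev-type orthogonal polynomials produces reproducing kernels built from $\sin$ and $\cos$; after rescaling about $\theta=0$ these kernels limit to $K(x-y)+\epsilon K(x+y)=K_{\epsilon}(x,y)$ with the sign $\epsilon=\pm 1$ determined precisely by the $(a,b)$-Jacobian. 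This yields \eqref{eq:nlevelSOeven} and \eqref{eq:nlevelSymp}. For $\SpOrthO$ there is a forced eigenvalue at $+1$: conditioning on this eigenvalue and treating the remaining $2N$ eigenvalues as in the $\Symp$ case gives the determinantal bulk piece $\det(K_{-1}(x_i,x_j))$, while the forced eigenvalue contributes a point mass at $0$ after rescaling. Expanding the $(n+1)$-point correlation by the column corresponding to this point mass produces exactly the extra sum $\sum_k \delta(x_k)\det(K_{-1}(x_i,x_j))_{i,j\ne k}$ in \eqref{eq:nlevelSOodd}. The averaging identity \eqref{eq:nlevelSpOrthE} is then immediate from $\Orth = \SpOrthE \sqcup \SpOrthO$ with equal Haar mass on each component.

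The main obstacle is the Pfaffian-to-determinant collapse for the orthogonal and symplectic ensembles: their $n$-point correlation functions are a priori Pfaffians of $2\times 2$ matrix kernels, and one must verify that in the scaling limit the off-diagonal blocks decay so that the Pfaffian reduces to the single determinant $\det(K_{\epsilon}(x_i,x_j))$ (plus the delta-function correction for $\SpOrthO$). Once this collapse and the identification of the scaling limits of the Chebyshev kernels are in hand, fixing the sign of $\epsilon$ per ensemble and isolating the forced eigenvalue contribution is essentially bookkeeping.
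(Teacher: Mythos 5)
The paper offers no proof of this theorem: it is stated as a citation to Katz--Sarnak \cite{KS1}, and your sketch follows essentially their route --- Weyl integration formula for each compact group, realization of the eigenangle process via Christoffel--Darboux-type reproducing kernels, scaling limit of the kernel at the eigenvalue $1$, the forced eigenvalue of $\SpOrthO$ giving the delta terms, and the equal-mass average over the two components giving the $\Orth$ formula. One substantive correction: the ``main obstacle'' you identify does not exist. For Haar measure on $\Unit(N)$, $\SO(2N)$, $\SO(2N+1)$ and $\UnitSp(2N)$ the eigenvalue correlations are exactly determinantal already at finite $N$: the Weyl densities are squared Vandermonde determinants in $\cos\theta_j$ times a weight, so these are orthogonal-polynomial ensembles whose finite-$N$ kernels are already of the form (Dirichlet-type kernel in $\theta-\phi$) plus or minus (Dirichlet-type kernel in $\theta+\phi$), and no $2\times 2$ matrix-kernel Pfaffian ever enters. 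The Pfaffian structure you are thinking of belongs to the circular $\beta=1,4$ ensembles (COE/CSE), which are not the ensembles relevant here; conflating those with the compact group ensembles would send you hunting for a ``collapse'' argument that is unnecessary. A second, minor point: for $\SpOrthO$ the remaining $2N$ eigenvalues are not distributed as in $\UnitSp(2N)$ at finite $N$ (the weights $\sin^2(\theta/2)$ versus $\sin^2\theta$ differ); it is only the scaled limits of the two kernels at the symmetry point that coincide, which is why $K_{-1}$ appears in both \eqref{eq:nlevelSOodd} and \eqref{eq:nlevelSymp}. With those repairs, your outline is the standard and correct argument, completed by the explicit computation of the kernel limits and the bookkeeping for the forced eigenvalue as in \cite{KS1}.
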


In addition to distinguishing families of $L$-functions, which the $n$-level correlations cannot do, we can use this statistic to obtain bounds for the order of vanishing of a family of $L$-functions at the central point by choosing a test function which is non-negative and positive at 0.

Our goal is to derive theoretical results for $n$-level densities and related statistics (such as the $n$\textsuperscript{th} centered moment), and then apply these to bound vanishing at the central point. Our bounds depend on the choice of test function and thus a large amount of our work is searching for the best choices. There have been several papers trying to find the optimal test function for given support. To date most of these have focused on using the 1-level density \cite{FrMil, ILS}, though there has been some recent work using the 2-level density \cite{BCDMZ}. For many applications, decent bounds are easily obtained with the following test function, which is non-negative and has bounded support for its Fourier transform (by rescaling our test function, we can always adjust the size of the support).

\begin{defi}[Naive Test Function]\label{def: naive test function} We consider the following Fourier transform pair
\begin{align}
   &  \varphi_{\rm naive}(x)\ =\ \left(\frac{\sin{\pi v x}}{\pi v x}\right)^2, \ \ \ \ \ \nonumber\\
   & \hvarphi_{\rm naive}(y)\ =\ \frac{1}{v}\left(1-\frac{|y|}{v}\right) \ \ \ {\rm if}\ |y| < v \ {\rm and}\ 0 \ {\rm otherwise}.
\end{align}
\end{defi}

\begin{rek}
    Taking $\Phi(x_1, \dots, x_n) = \varphi_{\rm naive}(x_1)\varphi_{\rm naive}(x_2)\cdots \varphi_{\rm naive}(x_n)$ yields decent bounds. While it has been shown in Appendix A of \cite{ILS} and \cite{BCDMZ} that there are other test functions that yield better results than those obtained by using the naive test function, the naive test function for the 1- and 2-level densities is close to the optimal choice, and thus the improvement is small. This is the motivation for our exploration of bounds arising from the higher centered moments, as a new idea is needed to make significant progress.
\end{rek}

%%%%%%%%%%%%%%%%%%%%%%%%%%%%%%%%%%%%%%%%%%%%%%%%%%%%%%%%%%%%%%%%%%%%%%%%%%%%%%%%%%%%%%%%%%%%%%%%%%%%%%
%%%%%%%%%%%%%%%%%%%%%%%%%%%%%%%%%%%%%%%%%%%%%%%%%%%%%%%%%%%%%%%%%%%%%%%%%%%%%%%%%%%%%%%%%%%%%%%%%%%%%%
\subsection{Centered Moments}
In \cite{ILS} the determinant expansion for the $1$-level density was used to bound the order of vanishing. As the determinant expansion is difficult to work with for higher levels, as computing its Fourier transform becomes complicated, we use an alternative statistic which is more amenable to computations. Our starting point is the $n$\textsuperscript{th} centered moments, studied by Hughes and Miller \cite{HuM}. These provide a more tractable approach to obtaining bounds from higher levels than the determinant expansion from the $n$-level density.\footnote{The Katz-Sarnak density conjectures applies equally to the $n$-level densities or the $n$\textsuperscript{th} centered moments. Thus for any even Schwartz test function, the $n$\textsuperscript{th} centered moments for a family of $L$-functions should agree with the corresponding statistic for eigenangles of a classical compact group.} The following results were proved in \cite{HuM}.

\begin{thm}\label{thm:special case moments}
Let $n \geq 2$, $\supp(\hphi) \subset (-\frac1{n-1},
\frac1{n-1})$, $D(f;\phi)$ be as in \eqref{def:density}, and
define
\begin{align}\label{eq:defRnn-1b}
\nonumber R_n(\phi) \ &= \ (-1)^{n-1} 2^{n-1}\left[ \int_{-\infty}^\infty
\phi(x)^{n}
\frac{\sin 2\pi x}{2\pi x} \;\d x - \foh \phi(0)^{n} \right]  \ = \ 2\int_{-1}^{1} |y|
\hphi(y)^2\;\d y.\\
\sigma^2_\phi  \ &= \ 2\int_{-1}^{1} |y|
\hphi(y)^2\;\d y.
\end{align}
We assume GRH for $L(s,f)$ and for all Dirichlet $L$-functions and send $N\to\infty$ through the primes\footnote{Recall the double factorial means the product of every other integer down to 2 or 1; thus $5!!$ is $5 \cdot 3 \cdot 1$ and $6!!$ is $6 \cdot 4 \cdot 2$.}:
\small
\begin{eqnarray}\label{eq:thmextmomcompactfirst}
\nonumber
& & \lim_{\substack{N\to\infty \\ N \text{\rm prime}}} \<
\left(D(f;\phi) - \< D(f;\phi) \>_\pm \right)^{n}\>_\pm =\
\begin{cases}
(2m-1)!!\ \sigma^{2m}_\phi \pm R_{2m}(\phi) &
\text{\rm if $n=2m$ is even,}\\
\pm R_{2m+1}(\phi) & \text{\rm if $n=2m+1$ is odd.}
\end{cases}
\end{eqnarray}
\end{thm}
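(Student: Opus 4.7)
The starting point is the explicit formula, which under GRH converts the sum $D(f;\phi)$ over the zeros of $L(s,f)$ into a smooth main term plus two prime sums. Schematically, up to $o(1)$ errors,
$D(f;\phi) = \mathcal{A}_\phi(N,k) - S_1(f) - S_2(f),$
where $\mathcal{A}_\phi(N,k)$ depends only on the analytic conductor,
$S_1(f) := 2\sum_p \frac{\lambda_f(p)\log p}{\sqrt{p}\,\log R}\,\widehat\phi\!\left(\frac{\log p}{\log R}\right),$
and
$S_2(f) := 2\sum_p \frac{\lambda_f(p^2)\log p}{p\,\log R}\,\widehat\phi\!\left(\frac{2\log p}{\log R}\right).$
The Hecke relation $\lambda_f(p^2) = \lambda_f(p)^2 - 1$ then lets me rewrite $S_2(f)$ in terms of $\lambda_f(p)^2$ and a deterministic piece.

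Next, I would compute the mean $\<D(f;\phi)\>_\pm$ using the Petersson trace formula on $H_k^\pm(N)$. The smooth term $\mathcal{A}_\phi$ and the deterministic part of $S_2$ pass through; $\<S_1\>_\pm$ is negligible apart from an $\epsilon_f$-dependent Kloosterman contribution that distinguishes the two symmetry types; and $\<\lambda_f(p)^2\>_\pm$ tends to $1$ by Petersson orthogonality. Subtracting the mean collapses $D(f;\phi) - \<D(f;\phi)\>_\pm$ to centered versions of $S_1$ and $S_2$ up to $o(1)$.

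The third step is to raise this centered quantity to the $n$th power and average over $f \in H_k^\pm(N)$. The multinomial expansion produces products of $\lambda_f$-values which, by Hecke multiplicativity $\lambda_f(m)\lambda_f(n) = \sum_{d\mid(m,n)} \lambda_f(mn/d^2)$, collapse to a single $\lambda_f(\cdot)$ summed against a combinatorial weight. The support hypothesis $\supp(\widehat\phi)\subset(-\tfrac{1}{n-1},\tfrac{1}{n-1})$ guarantees that every product of $n$ primes coming from $S_1$ factors has total logarithmic size strictly less than $\log R$, so Petersson forces only diagonal pairings to contribute and the off-diagonal Kloosterman terms are absorbed into the $o(1)$ error. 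The diagonal pairings of $n=2m$ copies of $S_1$ give $(2m-1)!!$ perfect matchings, each contributing $\sigma_\phi^2 = 2\int_{-1}^1 |y|\widehat\phi(y)^2\,dy$ via a prime-number-theorem evaluation. This produces the Gaussian-like main term $(2m-1)!!\,\sigma_\phi^{2m}$ when $n$ is even, and nothing at leading order when $n$ is odd.

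Finally, the sign-dependent correction $\pm R_n(\phi)$ arises from two coupled sources: the constant $-1$ in $\lambda_f(p^2) = \lambda_f(p)^2 - 1$, which feeds self-contractions of $S_1$ factors back into a single-prime sum with weight $\widehat\phi^{\,n}$, and the $\epsilon_f$-weighted Kloosterman term in Petersson, which attaches the sign $\pm$ keyed to $H_k^\pm(N)$. I expect the \emph{main obstacle} to live here: one must check that all non-Gaussian corrections, for both parities of $n$, assemble precisely into $R_n(\phi)=(-1)^{n-1}2^{n-1}\bigl[\int \phi(x)^n \tfrac{\sin 2\pi x}{2\pi x}\,dx - \tfrac12 \phi(0)^n\bigr]$ with the correct sign, and that the threshold $\tfrac{1}{n-1}$ is sharp for suppressing off-diagonal Kloosterman contributions. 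The combinatorial bookkeeping of Hecke multiplication tables against pairing structures, uniformly in $n$, is where the real work resides.
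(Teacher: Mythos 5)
Your plan follows the same overall route the paper relies on (the result is quoted from \cite{HuM}, and the paper's \S2--3 recapitulate exactly that machinery): explicit formula, Petersson averaging, sign splitting, and the $(2m-1)!!$ pairing combinatorics giving $\sigma_\phi^{2m}$. That Gaussian part of your sketch is fine. The genuine gap is in your account of the $\pm R_n(\phi)$ term. Any contribution that is not weighted by $\epsilon_f$ enters the averages over $H_k^+(N)$ and $H_k^-(N)$ identically, so it cannot produce a term whose sign flips with the family; hence the constant $-1$ in $\lambda_f(p^2)=\lambda_f(p)^2-1$ cannot be one of the ``two coupled sources'' of $\pm R_n$. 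In the framework the paper uses, the $p^2$ (and higher prime-power) terms are absorbed into the mean $\widehat\phi(0)+\tfrac12\phi(0)$ and the error in \eqref{eqdfphiexpansion}, and the self-contraction constants they would feed into the moments are negligible under $\supp(\widehat\phi)\subset(-\tfrac1{n-1},\tfrac1{n-1})$. The entire $\pm R_n$ comes from inserting the sign projector $(1\pm\epsilon_f)/2$ with $\epsilon_f=-\i^k\lambda_f(N)\sqrt N$ as in \eqref{eq:signfneqexpils}, i.e.\ from the single term $S_2^{(n)}$ of \eqref{eq:S_2}: the $\lambda_f(N)$-twisted, $\sqrt N$-weighted Petersson (Kloosterman--Bessel) contribution.

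Moreover, the evaluation of that term is the technical core of the theorem, and your proposal explicitly defers it: one must carry out the $\sqrt N$-weighted Bessel/Kloosterman analysis and reduce the resulting $n$-dimensional prime sum, via the convolution $\widehat\phi^{\,\ast n}$ and a change of variables, to the one-dimensional expression $(-1)^{n-1}2^{n-1}\bigl[\int\phi(x)^n\tfrac{\sin 2\pi x}{2\pi x}\,\d x-\tfrac12\phi(0)^n\bigr]$, with the correct sign for both parities of $n$. Finally, your remark that the threshold $\tfrac1{n-1}$ is ``sharp for suppressing off-diagonal Kloosterman contributions'' has it backwards for this statement: at support up to $\tfrac1{n-1}$ the Kloosterman term is \emph{not} suppressed --- it is precisely what produces $R_n$; it only becomes negligible at the smaller support of Theorem \ref{thm:less support}. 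So the plan is structurally aligned with the paper's source argument, but the sign-dependent main term is misattributed and its actual derivation is missing.
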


\begin{thm}\label{thm:less support}
    Let $n \geq 2$, if $\supp(\hphi) \subset
(-\frac{1}{n}\frac{2k-1}{k}, \frac{1}{n}\frac{2k-1}{k})$, then for the unsplit group,
\small
\begin{eqnarray} \label{eq:mock Gaussian for full group}
\nonumber
& & \lim_{\substack{N\to\infty \\ N\ {\rm prime}}}
\left\langle\left(D(f;\phi)-\<D(f;\phi)\>_\ast\right)^n\right\rangle_\ast =\
\begin{cases}
(2m-1)!! \left(2\intinf \hphi(y)^2 |y| \; \d y\right)^m & \text{\rm if $n=2m$ is even,}\\
0 & \text{\rm if $n = 2m+1$ is odd.}
\end{cases}
\end{eqnarray}
\text{For the split groups, }
\small
\begin{eqnarray}\label{eq:mock Gaussian for split group}
\nonumber
& & \lim_{\substack{N\to\infty \\ N\ {\rm prime}}}
\left\langle\left(D(f;\phi)-\<D(f;\phi)\>_\pm\right)^n\right\rangle_\pm =\
\begin{cases}
(2m-1)!! \left(2\intinf \hphi(y)^2 |y| \; \d y\right)^m & \text{\rm if $n=2m$ is even,}\\
0 & \text{\rm if $n = 2m+1$ is odd.}
\end{cases}
\end{eqnarray}

\end{thm}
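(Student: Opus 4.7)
The plan is to derive Theorem \ref{thm:less support} from the explicit formula for $L(s,f)$ combined with the Petersson trace formula, following the Hughes--Miller framework that also underlies Theorem \ref{thm:special case moments}. First I would apply the explicit formula to write the centered one-level density as
\begin{equation*}
    S(f;\phi) \;:=\; D(f;\phi) - \langle D(f;\phi)\rangle_{\ast} \;=\; -2\sum_p \frac{\lambda_f(p)\log p}{\sqrt{p}\log R}\,\hphi\!\left(\frac{\log p}{\log R}\right) + P_2(f;\phi) + o(1),
\end{equation*}
where $P_2$ collects the prime-square terms (higher prime powers are negligible under the support hypothesis). Thus $S(f;\phi)$ becomes essentially a linear functional of the Hecke eigenvalues $\lambda_f(p)$ supported on primes $p \le R^{(2k-1)/(nk)}$.

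Next I would expand $S(f;\phi)^n$ as an $n$-fold sum over primes $(p_1,\dots,p_n)$ and average over the family $\cF$ of newforms using Petersson's formula. The averaged product of Hecke eigenvalues decomposes into a diagonal Kronecker-$\delta$ contribution plus off-diagonal Kloosterman contributions. The off-diagonal terms are controlled by Weil's bound and are negligible as long as the product of the $n$ primes does not exceed $R^{(2k-1)/k}$; since each prime is bounded by $R^{(2k-1)/(nk)}$, the total product is at most $R^{(2k-1)/k}$, which is precisely the sharp threshold encoded in the support hypothesis.

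Then I would analyze the diagonal contribution using the Hecke multiplicativity $\lambda_f(p)^2 = \lambda_f(p^2) + 1$ (and its higher-degree analogues) together with the averaged orthogonality $\langle \lambda_f(p^j)\rangle_{\ast} \approx \delta_{j,0}$. A Wick-style combinatorial argument then shows that only \emph{complete pairings} of the $n$ primes survive, each pairing contributing a factor of the variance
\begin{equation*}
    \sigma_\phi^2 \;=\; 4\sum_p \frac{(\log p)^2}{p\,(\log R)^2}\,\hphi\!\left(\frac{\log p}{\log R}\right)^2 \;=\; 2\intinf \hphi(y)^2|y|\,\d y
\end{equation*}
via the prime number theorem. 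The number of complete pairings of $n=2m$ objects is $(2m-1)!!$, yielding the even case, while for $n=2m+1$ no complete pairing exists and the moment vanishes.

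The main obstacle is controlling the \emph{incomplete} pairings, that is, diagonal terms in which some $\lambda_f(p)^j$ with $j\ge 3$ survives after Hecke reduction, together with the residual Kloosterman contributions. In Theorem \ref{thm:special case moments} these produce exactly the correction term $R_n(\phi)$; under the tighter support assumption of Theorem \ref{thm:less support} one must verify that each such term is of strictly smaller order than a complete-pairing contribution, so that the Gaussian skeleton is exact. A secondary difficulty is handling the split families $H_k^{\pm}(N)$: the sign projection $(1\pm\epsilon_f)/2$ introduces root-number twisted averages, and one must show that the $\pm R_n(\phi)$ correction from Theorem \ref{thm:special case moments} vanishes in the smaller-support regime, yielding the same Gaussian moments as for the unsplit family.
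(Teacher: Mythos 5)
Your proposal follows essentially the same route as the paper's treatment (which simply quotes this result from \cite{HuM} and reuses its machinery in \S2.3--\S5): explicit formula reducing the centered density to the prime sum $P(f;\phi)$, Petersson averaging with diagonal/Bessel--Kloosterman separation, Wick-style pairing combinatorics giving $(2m-1)!!\,\sigma_\phi^{2m}$ for even $n$ and $0$ for odd $n$, and the key point that the sign-splitting term (the $S_2^{(n)}$ term producing $\pm R_n(\phi)$ at larger support) is negligible when $\supp(\hphi)\subset(-\frac{1}{n}\frac{2k-1}{k},\frac{1}{n}\frac{2k-1}{k})$. Your identification of the incomplete pairings and the root-number--twisted averages as the main obstacles matches exactly where the Hughes--Miller analysis does its work, so the proposal is correct in approach and structure.
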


The advantage of the Hughes-Miller results is that we replace the $n \times n$ determinant expansions with a one-dimensional integral; this is accomplished through a clever change of variable reducing an $n$-dimensional integral involving Bessel functions and $n$ test functions to a related one-dimensional integral with just one Bessel function against a new test function, which is the convolution of the $n$ initial test functions. This is done to take advantage of the results from \cite{ILS}, which handled the 1-dimensional case. The resulting integral can be easily computed, but there is a cost: it no longer resembles an $n$-dimensional quantity and delicate combinatorics are needed to show agreement with the answers from random matrix theory. Overall, this is a good trade, as the new result is more tractable for computations.\footnote{In particular, we can see the emergence of new terms every time the support exceed $(-1/(n-\ell), 1/(n-\ell))$ for any integer $\ell$; see \cite{HuM} for details.}

Theorems \ref{thm:special case moments} and \ref{thm:less support} concern only the special case where all test functions are identical. Thus, compared to the $n$-level density, there is a loss when using the results of Hughes-Miller as we are restricted in what test functions we can use, which decreases the space of functions we can search for optimal test functions. Our main theoretical result, described below, is a generalization of these results to arbitrary test functions. By removing the condition that the test functions must all be equal, we are able to obtain better bounds on vanishing.

%%%%%%%%%%%%%%%%%%%%%%%%%%%%%%%%%%%%%%%%%%%%%%%%%%%%%%%%%%%%%%%%%%%%%%%%%%%%%%%%%%%%%%%%%%%%%%%%%%%%%%%%
%%%%%%%%%%%%%%%%%%%%%%%%%%%%%%%%%%%%%%%%%%%%%%%%%%%%%%%%%%%%%%%%%%%%%%%%%%%%%%%%%%%%%%%%%%%%%%%%%%%%%%%%
\subsection{Main Results}
Miller \cite{Mil1, Mil2} noticed that as $n$ increases, the $n$-level densities provide better and better estimates for bounding the order of vanishing at the central point. We obtain similar bounds from the $n$\textsuperscript{th} centered moments. For each $n$, there is a constant $c_n$ such that the probability of vanishing to order $r$ is at most $c_n / r^n$. For fixed $n$, as $r$ increases we see larger $n$ provided better bounds than those from smaller $r$; however, as the constant $c_n$ grows (its growth comes from the decrease in support of the test functions where the $n$-level density is known) for small $r$ the bounds can actually be worse, or even useless.\footnote{For small $n$ and $r$ the bounds exceed 1, and there are easier ways to show at most 100\% of the forms vanish to a given order!}

Moreover, the results for the $n$\textsuperscript{th} centered moment from \cite{HuM} (Theorem  \ref{thm:special case moments} and Theorem \ref{thm:less support}) lose flexibility because \cite{HuM} only worked with the special case where all test functions are assumed to be identical; in particular $\Phi(x_1,\dots, x_n) = \phi(x_1) \cdots \phi(x_n)$. Thus, on the theory side, our first results is to generalize Theorem  \ref{thm:special case moments} and Theorem \ref{thm:less support} to allow $\Phi$ to be the product of arbitrary test functions that are not necessarily the same.

\begin{thm}\label{thm: generalized centered moment} \emph{(Generalized Theorem \ref{thm:special case moments})}
    Let $n \geq 2$, $\supp(\hphi_j) \subset (-\frac1{n-1},
    \frac1{n-1})$, $D(f;\phi_j)$ be as in \eqref{def:density}, and
    define
    \begin{align}
    R_n(\phi_1,\phi_2,\dots,\phi_n) \ &= \ (-1)^{n-1} 2^{n-1}\left[ \int_{-\infty}^\infty
    \phi_1(x)\cdots \phi_n(x)
    \frac{\sin 2\pi x}{2\pi x} \;\d x \right. \left.- \foh \phi_1(0)\cdots \phi_n(0) \right]\nonumber\\
    \sigma^2_{\phi_j\phi_k} \ &= \ 2\int_{-1}^{1} |y|
    \hphi_j(y)\hphi_k(y)\;\d y.
    \end{align}
    Assume GRH for $L(s,f)$ and for all Dirichlet $L$-functions. As
    $N \to \infty$ through the primes,
    \begin{eqnarray}\label{eq:thmextmomcompactsecond}
    & & \lim_{\substack{N\to\infty \\ N \text{\rm prime}}} \< \prod_{j=1}^n
    \left(D(f;\phi_j) - \< D(f;\phi_j) \>_\pm \right)\>_\pm
    \nonumber \\
    & & =\
    \begin{cases}
    \sum_{k = 1}^{(2m-1)!!} \prod_{\substack{\{a_l,b_l\} \in \mathcal{M}_{2m}^{k} \\ l = 1}}^{m} \sigma^{2}_{\phi_{a_l} \phi_{b_l}} \pm R_{2m}(\phi_1,\dots,\phi_{2m}) &
    \text{\rm if $n=2m$ is even,}\nonumber\\
    \pm R_{2m+1}(\phi_1,\dots,\phi_{2m+1}) & \text{\rm if $n=2m+1$ is odd.} \nonumber
    \end{cases}
    \end{eqnarray}
\end{thm}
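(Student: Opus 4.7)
The plan is to run the argument of \cite{HuM} for Theorem \ref{thm:special case moments} while keeping an index on each test function throughout. First, expand the product $\prod_{j=1}^n (D(f;\phi_j) - \<D(f;\phi_j)\>_\pm)$ multilinearly and apply the explicit formula to each $D(f;\phi_j)$, writing it as an arithmetic sum over prime powers weighted by $\hphi_j$; the centering removes the deterministic piece so that only the oscillatory prime-sum contribution survives. Then average over $f \in \hkpmn$ via the Petersson trace formula as in \cite{ILS} and pass $N \to \infty$ through the primes. The support hypothesis $\supp(\hphi_j) \subset (-1/(n-1), 1/(n-1))$ forces off-diagonal prime tuples to be negligible, leaving a ``matching diagonal'' plus a single un-paired term.

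The new combinatorial point, and the place where our generalization departs from \cite{HuM}, is the bookkeeping of matchings. In the identical case, each of the $(2m-1)!!$ perfect matchings of $\{1,\dots,2m\}$ contributes the same factor $\sigma_\phi^{2m}$, collapsing to a single prefactor. Here, each matching $\{\{a_1,b_1\},\dots,\{a_m,b_m\}\} \in \mathcal{M}_{2m}^k$ instead contributes $\prod_{l=1}^m \sigma^2_{\phi_{a_l}\phi_{b_l}}$, and the sum must be left expanded. The two-variable quantity
\[\sigma^2_{\phi_j\phi_k} \ = \ 2\int_{-1}^{1} |y|\hphi_j(y)\hphi_k(y)\,\d y\]
arises from the same Fourier/Mellin manipulation that produces $\sigma^2_\phi$ in \cite{HuM}, with the pair of test functions entering linearly in each slot---no step of that derivation actually uses $\phi_j = \phi_k$. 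The un-paired diagonal assembles into the multilinear object $R_n(\phi_1,\dots,\phi_n)$, with the $\pm$ sign coupling through the root number $\epsilon_f$ exactly as before, because this sign factor sees only the symmetric product $\phi_1(0)\cdots\phi_n(0)$ and the symmetric ``$\sin(2\pi x)/(2\pi x)$'' integral.

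The main obstacle is notational rather than conceptual: one must carry a test-function index through every step (prime-sum bounds, and the convolution rearrangement that reduces the $n$-dimensional integrals to a one-dimensional integral against a single Bessel function) and verify that no identity silently uses equality of test functions. A clean shortcut is polarization. The functional $(\phi_1,\dots,\phi_n)\mapsto \<\prod_j (D(f;\phi_j) - \<D(f;\phi_j)\>_\pm)\>_\pm$ is symmetric and $n$-linear because each $D(f;\phi_j)$ is linear in $\phi_j$, and such a functional is determined by its diagonal $\phi_1 = \cdots = \phi_n = \phi$. Theorem \ref{thm:special case moments} supplies that diagonal, and polarizing its two pieces yields the claim: $\sigma_\phi^{2m}$ polarizes into the matching sum over $\mathcal{M}_{2m}^k$, while $R_n(\phi)$ polarizes into $R_n(\phi_1,\dots,\phi_n)$ as defined. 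This route avoids redoing the arithmetic and reduces the work to verifying that each term in the claimed formula is genuinely multilinear and that the polarization is counted without double-counting or omission.
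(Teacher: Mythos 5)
Your proposal is correct, and its polarization shortcut is a genuinely different route from the paper's. The paper proves the theorem by re-running the Hughes--Miller computation with an index carried on each test function: the main terms regroup into the matching sum exactly as in your second paragraph, and the genuinely new technical ingredient is the error-term analysis, where all the $\hphi_j$ are simultaneously majorized by a single admissible function $\hpsi_\epsilon(x)=\eta_\epsilon(x)\bigl(1+\sum_j\hphi_j(x)^2\bigr)$, built from the elementary bound $|f|\le f^2+1$ of Lemma \ref{lem: functionBound} and a smooth cutoff $\eta_\epsilon$ keeping the support inside the allowed interval, so that the bounds of \cite{HuM} apply verbatim with every test function replaced by $\hpsi_\epsilon$; your direct sketch (``carry an index and check nothing uses equality'') glosses over precisely this step, which is where the paper's new work lies. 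Your polarization argument, however, sidesteps it entirely and is valid: at each finite $N$ the functional $(\phi_1,\dots,\phi_n)\mapsto\<\prod_{j}\bigl(D(f;\phi_j)-\<D(f;\phi_j)\>_\pm\bigr)\>_\pm$ is exactly symmetric and $n$-linear, and the admissible class (even Schwartz functions with $\supp(\hphi)\subset(-\tfrac1{n-1},\tfrac1{n-1})$) is a real vector space since Theorem \ref{thm:special case moments} does not require non-negativity, so every combination $\sum_{i\in S}\phi_i$ appearing in the polarization identity is admissible; applying the identity at finite $N$, letting $N\to\infty$ through primes via the diagonal theorem, and checking that the claimed right-hand side is the unique symmetric multilinear form whose diagonal is $(2m-1)!!\,\sigma_\phi^{2m}\pm R_{2m}(\phi)$ (respectively $\pm R_{2m+1}(\phi)$) gives the theorem with no new arithmetic. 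What each approach buys: the paper's route produces the dominating-function device, which is reused for Theorem \ref{thm: generalized less support} and for support-extension arguments, and it exhibits the matching structure concretely through the prime pairings; your route is shorter, treats the combinatorial identification with $\mathbb{M}_{2m}$ as automatic rather than case-by-case, but it is tied to the exact support range of the quoted diagonal result and gives no independent handle on the error terms.
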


\begin{thm}\label{thm: generalized less support}\emph{(Generalized Theorem \ref{thm:less support})} Let $n \geq 2$, if
$\supp(\hphi_j) \subset
(-\frac{1}{n}\frac{2k-1}{k}, \frac{1}{n}\frac{2k-1}{k})$, then for the unsplit group
\begin{eqnarray}\label{eq:generalized mock Gaussian for full group}
    & & \lim_{N\to\infty \atop N \text{\rm prime}} \< \prod_{j=1}^n
    \left(D(f;\phi_j) - \< D(f;\phi_j) \>_\ast \right)\>_\ast  =\
    \begin{cases}
    \sum_{ k = 1}^{(2m-1)!!} \prod_{\substack{\{a_l,b_l\} \in \mathcal{M}_{2m}^{k} \\ l = 1}}^{m} \sigma^{2}_{\phi_{a_l} \phi_{b_l}} &
    \text{\rm if $n=2m$ is even,}\nonumber\\
    0 & \text{\rm if $n$ is odd,}
    \end{cases}
    \end{eqnarray}
    \text{and for the split groups}
    \begin{eqnarray}\label{eq:generalized mock Gaussian for split group}
    & & \lim_{N\to\infty \atop  N \text{\rm prime}} \< \prod_{j=1}^n
    \left(D(f;\phi_j) - \< D(f;\phi_j) \>_\pm \right)\>_\pm  =\
    \begin{cases}
    \sum_{ k = 1}^{(2m-1)!!} \prod_{\{a_l,b_l\} \in \mathcal{M}_{2m}^{k} \atop l = 1}^{m} \sigma^{2}_{\phi_{a_l} \phi_{b_l}} &
    \text{\rm if $n=2m$ is even,}\nonumber\\
    0 & \text{\rm if $n$ is odd.}
    \end{cases}
\end{eqnarray}
\end{thm}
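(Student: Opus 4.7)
The plan is to follow the Hughes-Miller strategy of \cite{HuM}, but now tracking the identity of each test function $\phi_j$ throughout the argument rather than collapsing them all to a single $\phi$ as in Theorem \ref{thm:less support}. The three key ingredients remain the explicit formula, averaging over the family via a Petersson-type trace formula as in \cite{ILS}, and a Wick-style combinatorial pairing, adapted to the non-identical setting.

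First, for each $f \in H^\star_k(N)$ the explicit formula writes
\begin{equation*}
D(f;\phi_j) \ = \ A_j - P_j(f) - Q_j(f) + o(1),
\end{equation*}
where $A_j$ is a deterministic main term depending only on $\hphi_j(0)$, $P_j(f)=2\sum_p \frac{\log p}{\log R}\frac{1}{\sqrt p}\hphi_j\bigl(\tfrac{\log p}{\log R}\bigr)\lambda_f(p)$ is the prime sum, and $Q_j(f)$ is the analogous prime-square sum. Subtracting the family average $\langle D(f;\phi_j)\rangle_\ast$ (respectively $\langle\cdot\rangle_\pm$) cancels $A_j$ and the mean of $Q_j$, leaving a centered fluctuating part dominated by $-P_j(f)$. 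Expanding $\prod_{j=1}^n \bigl(D(f;\phi_j)-\langle D(f;\phi_j)\rangle_\ast\bigr)$ then produces a multi-sum over tuples of primes $(p_1,\dots,p_n)$ with weight $\prod_j \frac{2\log p_j}{\sqrt{p_j}\log R}\hphi_j\bigl(\tfrac{\log p_j}{\log R}\bigr)\lambda_f(p_j)$.

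Next, average this product over the family using the Petersson formula; the output is a diagonal contribution plus off-diagonal Kloosterman terms. Under the stricter support hypothesis $\supp(\hphi_j) \subset (-\frac{1}{n}\frac{2k-1}{k}, \frac{1}{n}\frac{2k-1}{k})$ the Kloosterman contributions are negligible in the limit, exactly as in the identical-test case of \cite{HuM}: since each $\hphi_j$ is individually supported in that interval, the bounds pass through term-by-term without invoking equality of the $\phi_j$. The same support condition also keeps us uniformly below the threshold at which $R_n$-type boundary corrections (present in Theorem \ref{thm: generalized centered moment}) would emerge. What survives is the diagonal in which the multiset $\{p_1,\dots,p_{2m}\}$ decomposes into $m$ pairs $p_{a_l}=p_{b_l}$. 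For each perfect matching $\{\{a_l,b_l\}\}_{l=1}^m \in \mathcal{M}_{2m}^k$, a standard partial-summation and prime-number-theorem computation shows that each pair contributes
\begin{equation*}
2\int_{-1}^{1}|y|\,\hphi_{a_l}(y)\hphi_{b_l}(y)\,dy \ =\ \sigma^2_{\phi_{a_l}\phi_{b_l}}.
\end{equation*}
Summing over the $(2m-1)!!$ perfect matchings yields the claimed formula for even $n=2m$; for odd $n=2m+1$ every configuration has at least one unpaired index whose prime sum averages to $o(1)$, so the limit vanishes. The unsplit and split cases differ only by the sign assignment inside $\langle\cdot\rangle_\pm$, which is irrelevant for the fully paired contributions.

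The main obstacle is that the Hughes-Miller argument in the identical-test case uses a clever reduction which convolves all $n$ test functions into a single one and invokes the one-dimensional result of \cite{ILS}. With distinct $\phi_j$ this trick no longer collapses the problem to a single convolution, so instead we must work directly with the pairwise structure and verify that every unpaired or partially paired contribution (both off-diagonal Kloosterman terms and potential cross contributions from the $Q_j$'s) vanishes under the stated support. Since each $\sigma^2_{\phi_{a_l}\phi_{b_l}}$ involves only two test functions at a time, the pair-by-pair bookkeeping remains tractable, and the sum-over-matchings structure emerges naturally from the combinatorics of $\langle\prod_j \lambda_f(p_j)\rangle_\ast$.
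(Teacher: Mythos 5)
Your main-term computation is the same as the paper's: after the explicit formula and the Petersson/Hughes--Miller machinery, the diagonal configurations pair the primes, each pair contributes $\sigma^2_{\phi_{a_l}\phi_{b_l}}$ by the prime number theorem, and summing over the $(2m-1)!!$ matchings in $\mathcal{M}_{2m}^k$ gives the even case, with the odd case vanishing; this is exactly Lemma \ref{lem:GeneralizedExpressionS1}. The gap is in how you dispose of everything else. First, you assert that the off-diagonal and higher-prime-power bounds of \cite{HuM} ``pass through term-by-term without invoking equality of the $\phi_j$,'' but that is precisely the point that needs an argument, since the \cite{HuM} error analysis is written for a single $\phi$. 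The paper supplies a concrete device: a single smooth majorant $\hpsi_\epsilon(x)=\eta_\epsilon(x)\bigl(1+\sum_j \hphi_j(x)^2\bigr)$ as in \eqref{eq: psi final}, which pointwise dominates every $|\hphi_j|$, is compactly supported in the same interval, and lets one quote the \cite{HuM} bounds wholesale. Your proposal has no substitute mechanism, only the claim that the bookkeeping works out.

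Second, and more seriously, your treatment of the split families is wrong as stated. Saying the unsplit and split cases ``differ only by the sign assignment inside $\langle\cdot\rangle_\pm$, which is irrelevant for the fully paired contributions'' ignores that restricting to even or odd sign via $(1\pm\epsilon_f)/2$ with $\epsilon_f=-\i^k\sqrt{N}\lambda_f(N)$ produces, as in \eqref{eq:splitting} and \eqref{eq:nth_mmt_in_terms_of_S}, the extra term $S_2^{(n)}$ of \eqref{eq:S_2}, which carries a $\sqrt{N}\,\lambda_f(N)$ weight and Hecke relations generating $\lambda_f(Nq_1^{m_1}\cdots)$ terms. Showing that this $\sqrt{N}$-amplified term is negligible for $\supp(\hphi_j)\subset(-\frac{1}{n}\frac{2k-1}{k},\frac{1}{n}\frac{2k-1}{k})$ is the entire content of the paper's proof of this theorem (done again by substituting $\hpsi_\epsilon$ for every $\hphi_j$ and following Lemma 3.2 of \cite{HuM}); it is not a sign bookkeeping issue, and at larger support it is exactly this term that produces the $R_n$ corrections of Theorem \ref{thm: generalized centered moment}. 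You gesture at this with the remark about staying below the $R_n$ threshold, but you give no argument that $S_2^{(n)}\to 0$ with distinct test functions, so the split half of the statement is not established by your proposal.
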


On our calculation side, we are now able to compute bounds for the order vanishing for SO(even) and SO(odd) using the fourth centered moment. We also enumerate bounds from $1$- and $2$-level densities for comparison. We bound the order of vanishing at the central point; many conjectures suggest that for the full family of cuspidal newforms half the forms are of rank 0 and half are of rank 1; however, for sub-families different behavior is possible (in particular, it is possible to construct families of elliptic curves with rank up to 14 at the central point, if we assume the Birch and Swinnerton-Dyer Conjecture, while unconditionally we can build either even or odd signed families that have higher average ranks than the conjectured result for the full family). While we apply our analysis to the case of the split or unsplit families, the test functions we use can yield results in other cases of interest as well. We first define the quantity we wish to bound, the percentage of forms vanishing to a given degree.

\begin{defi}[Order of vanishing / rank] If the series expansion of $L(s,f)$ begins $\alpha_m (s - 1/2)^m + \alpha_{m+1} (s - 1/2)^{m+1} + \cdots$ with $\alpha_m \neq 0$, we say the \emph{order of vanishing} (or \emph{rank}) of the form at the central point is $m$. We let $p_m(N)$ denote the percentage of the forms in our family of level $N$ (either split or unsplit, depending on context) that vanish to order exactly $m$ at the central point $s= 1/2$.
\end{defi}

We use the below inequalities to calculate these bounds. These follow from our expansions as our test functions are even and non-negative; thus if we consider \emph{only} the zeros at the central point, we decrease the sum over the zeros, and thus the integrals provide upper bounds for the action at $s = 1/2$. The contribution from the forms with exactly $m$ zeros at the central point is just the percentage of such forms, namely $p_m(N)$, times the contribution at the central point. As all the zeros contribute equally here, namely $\phi(0)$ or $\phi(0,0)$ and so on, we can move that over to the integral side. Thus, the quantities of interest are now isolated on the right hand side and are independent of the test function, which only appears on the left side. Further, note that if we rescale the test function by $c$ there is no change on the left side, as the integral increases by a factor of $c$ which is then cancelled by the $c$ in the denominator from dividing by the test function.
\ \\
\noindent \textbf{In all the arguments below, we assume the test functions are even and non-negative, and that GRH holds for all $L$-functions under consideration.}

\begin{rek}\label{rek: restriction on k}
     The inequalities for centered moments are only true when we evaluate a certain order of vanishing for $L(s,f)$. Specifically, we need the order of vanishing $r$ to satisfy $r\phi(0)$ $>$ $\hphi(0)+\phi(0)/2$ so that we can maintain the inequality. We expand more on this in Section \ref{subsec: bounds from centered moments sum}.
\end{rek}
\noindent \textbf{For the $1$-level density: }
\begin{align}\label{eq: inequality 1-level density}
	& \frac{1}{\phi(0)} \int_\R \phi(x) W_{1, G} (x) dx  \ \geq \ \ \ \sum_{j = 1}^\infty j p_j(N).
\end{align}
\noindent \textbf{For the $2$-level density: }
\begin{align}\label{eq: inequality 2-level density}
    & \frac{1}{\Phi(0,0)} \int_{\R^2} \Phi(x, y) W_{2, G} (x, y) dx dy \ \geq \ \ \ \sum_{j = 1}^\infty \left(2j(2j - 2) p_{2j}(N) + (2j)^2 p_{2j+1}(N) \right).
\end{align}

\ \\
\noindent \textbf{For even centered moments:}
For the special case where all test functions are equal, we obtain the following:
\begin{align}\label{eq: inequality fourth centered moment}
    & \left\langle\left(  D(f;\phi)-\<D(f;\phi)\>_\sigma\right)^{2m}\right\rangle_\sigma \ \geq \ \ \ \sum_{j=r}^{\infty} p_j(N)\left(r\phi(0)-\left(\hphi(0)+\frac{1}{2}\phi(0)\right)\right)^{2m}.
\end{align}

For the more generalized case, we still have to pair test functions up to ensure that the contribution from each $D(f; \phi_j)$ for each $f$ in the family is positive, so that when we drop terms, the inequality is maintained. Thus, if we are evaluating the $2m$\textsuperscript{th} centered moment, we can pick $m$ test functions arbitrarily, so that the terms are squared. We have the following inequality:

\begin{align} \label{eq: inequality centered moment generalized}
    & \left\langle \prod_{s=1}^{m}\left( D(f;\phi_s)-\<D(f;\phi_s)\>_\sigma\right)^{2}\right\rangle_\sigma\nonumber \geq \ \ \ \sum_{j=r}^{\infty}  p_j(N)\prod_{s=1}^m\left( r\phi_s(0)-\left(\hphi_s(0)+\frac{1}{2}\phi_s(0)\right)\right)^{2}.
\end{align}

We compare our results with the bounds obtained using the $1$- and $2$-level densities, and see \emph{\textbf{significant}} improvements though due to restriction mentioned in Remark \ref{rek: restriction on k}, we could not compute bounds for small vanishing (the exact vanishing restriction depends on the test function we choose) using the centered moment. We also find a relatively good choice for the test function for the $4$\textsuperscript{th} centered moment that yields a better result than $4$ copies of the naive test function.

\begin{table}[H]
    \centering
    \begin{tabular}{|l|l|l|l|}
    \hline
        Order vanishing & $1$-level & $2$-level & $4$\textsuperscript{th} centered moment\textsuperscript{*} \\ \hline
        5  & 0.222908 & 0.0674429 & 0.06580440 \\ \hline
        6  & 0.144090 & 0.0157687 & 0.00853841 \\ \hline
        7  & 0.159220 & 0.0299746 & 0.00221997 \\ \hline
        8  & 0.108067 & 0.0078843 & 0.00081336 \\ \hline
        9  & 0.123838 & 0.0168607 & 0.00036405 \\ \hline
        10 & 0.086454 & 0.0047306 & 0.00018684 \\ \hline
    \end{tabular}\small
        \caption{Comparison of order of vanishing bounds from various approaches. \\
        These are upper bounds for vanishing at least $r$ (number in order vanishing column).\\
        For the $1$-level column, we calculated the bound using the optimal $1$-level bound from \cite{ILS}. The support of the Fourier transform of the test function used is $(-2,2)$.\\
        For the $2$-level column, we calculated the bound using the optimal $2$-level bound from \cite{BCDMZ}. The support of the Fourier transform of the test functions used is $(-1,1)$.\\
        For the $4$\textsuperscript{th} centered moment\textsuperscript{*} column, the \textsuperscript{*} signifies that we used the $4$ copies of the naive test functions $\varphi_{\rm naive}$. The support of the Fourier transform of the test function used is $(-1/3,1/3)$.}
\end{table} \normalsize

The more detailed tables of bounds can be found in \S\ref{sec: table of numerical bounds}.

The paper is organized as follows. In \S\ref{sec: preliminaies} we review number theory and statistic definitions and introduce notations needed for later arguments, and in \S\ref{sec: proof of first theorem} we prove Theorem \ref{thm: generalized centered moment}. In \S\ref{sec: proof for second theorem (less support)} we prove Theorem \ref{thm: generalized less support}, then in \S\ref{sec: bounds order of vanishing} we show how we obtained the inequalities \eqref{eq: inequality 1-level density}, \eqref{eq: inequality 2-level density}, and \eqref{eq: inequality fourth centered moment}. In \S\ref{sec: table of numerical bounds} we explicitly state the significantly better bounds we obtained.

%%%%%%%%%%%%%%%%%%%%%%%%%%%%%%%%%%%%%%%%%%%%%%%%%%%%%%%%%%%%%%%%%%%%%%%%%%%%%%%%%%%%%%%%%%%%%%%%%%%%%%%%
%%%%%%%%%%%%%%%%%%%%%%%%%%%%%%%%%%%%%%%%%%%%%%%%%%%%%%%%%%%%%%%%%%%%%%%%%%%%%%%%%%%%%%%%%%%%%%%%%%%%%%%%
%%%%%%%%%%%%%%%%%%%%%%%%%%%%%%%%%%%%%%%%%%%%%%%%%%%%%%%%%%%%%%%%%%%%%%%%%%%%%%%%%%%%%%%%%%%%%%%%%%%%%%%%
%%%%%%%%%%%%%%%%%%%%%%%%%%%%%%%%%%%%%%%%%%%%%%%%%%%%%%%%%%%%%%%%%%%%%%%%%%%%%%%%%%%%%%%%%%%%%%%%%%%%%%%%
\section{Preliminaries}\label{sec: preliminaies}

In this section, we introduce the various quantities that play key roles in our proofs.

%%%%%%%%%%%%%%%%%%%%%%%%%%%%%%%%%%%%%%%%%%%%%%%%%%%%%%%%%%%%%%%%%%%%%%%%%%%%%%%%%%%%%%%%%%%%%%%%%%%%%%%%
%%%%%%%%%%%%%%%%%%%%%%%%%%%%%%%%%%%%%%%%%%%%%%%%%%%%%%%%%%%%%%%%%%%%%%%%%%%%%%%%%%%%%%%%%%%%%%%%%%%%%%%%
\subsection{Number Theory Preliminaries}

The first item we need is the characteristic function of a set $A$, which detects whether or not $x$ is in $A$.

\begin{defi}[Characteristic Function]
For $A \subset \R$, let  \begin{equation} \twocase{\I_{\{x \in
A\}} \ := \ }{1}{if $x\in A$}{0}{otherwise.}
\end{equation}
\end{defi}

There are multiple definitions of the Fourier Transform; we use the following.

\begin{defi}[Fourier Transform] Given a function $\phi$, its Fourier Transform, $\hphi$, is
\begin{equation}
\nonumber \hphi(y)\ :=\ \intinf \phi(x) e^{-2\pi\i xy} \;\d x, \ \ \ \ \
\phi(x)\ =\ \intinf \hphi(y) e^{2\pi\i xy} \;\d y.
\end{equation}
\end{defi}

Note if $\phi$ is even then the Fourier transform of the Fourier transform is the original function.

In analyzing lower order / error terms, we frequently use big-Oh notation.

\begin{defi} [Big-Oh Notation] A function $f(x)$ is big-Oh of $g(x)$, written $f(x) = O(g(x))$ or $f(x) \ll g(x)$, if there exists an $x_0$ and a $C > 0$ such that for all $x > x_0$ we have $|f(x)| \le C g(x)$. In other words, for $x$ sufficiently large, the absolute value of $f(x)$ is at most a constant times that of $g(x)$.
\end{defi}

In determining the optimal test function we frequently take the convolution of two functions. This process has numerous applications (if $X$ and $Y$ are independent random variables with densities $f$ and $g$ then the convolution of $f$ and $g$ is the density of $X+Y$).

\begin{defi}[Convolution]
Let $f$ and $g$ be two integrable functions. Then their convolution, denoted $f \ast g$, is \begin{eqnarray}
    (f \ast g)(x) \ := \ \int_{-\infty}^\infty f(t) g(x-t) dt.
\end{eqnarray}
\end{defi}

If the functions are square-integrable, by the Cauchy-Schwartz inequality the convolution is finite for each $x$; it is finite for almost all $x$ since the two functions are integrable (integrate the convolution over $x$ and examine the double integral, which splits).

%%%%%%%%%%%%%%%%%%%%%%%%%%%%%%%%%%%%%%%%%%%%%%%%%%%%%%%%%%%%%%%%%%%%%%%%%%%%%%%%%%%%%%%%%%%%%%%%%%%%%%%%
%%%%%%%%%%%%%%%%%%%%%%%%%%%%%%%%%%%%%%%%%%%%%%%%%%%%%%%%%%%%%%%%%%%%%%%%%%%%%%%%%%%%%%%%%%%%%%%%%%%%%%%%
\subsection{Statistics Preliminaries}

We review some statistical terms and concepts that appear in our analysis. The first is needed in the generalization from Hughes-Miller \cite{HuM}, which considered the case where all the test functions were the same, to our more general case of $\Phi(x_1, \dots, x_n) = \phi_1(x_1) \cdots \phi_n(x_n)$. In Hughes-Miller \cite{HuM} we had a combinatorial factor of $(2m-1)!!$, the number of ways to match $2m$ objects in pairs with order not mattering. For us we need not just the number of such pairings, but also which items are paired with which.

\begin{defi}[Set of Sets of Unordered Pairs] We have
$\mathbb{M}_{2m} = \{\{\{a_1,b_1\},\{a_2,b_2\}, \dots, \{a_{m},b_{m}\}\} : \{a_1,b_1,\dots,a_{m},b_{m}\} = \{1,2,\dots,2m\}\}.$
We denote the $k$\textsuperscript{{\rm th}} element of $\mathbb{M}_{2m}$ as $\mathcal{M}_{2m}^k$.
\end{defi}

If we restrict to the special case when all the test functions are the same, we regain the result of Hughes-Miller \cite{HuM} and their combinatorial factor of $(2m-1)!!$ as \begin{equation}   \lvert \mathbb{M}_{2m} \rvert \ =\ \frac{1}{m!}\binom{2m}{2}\binom{2m-2}{2}\cdots\binom{2}{2}\ = \ \ \frac{(2m)!}{2^m m!} \ = \ (2m-1)!!.\nonumber
\end{equation}

\begin{defi}[Statistical Terms]\ \\
\textbf{Expected Value:} For a continuous univariate probability distribution with probability density function $p(x)$, the expected value $\mu$ is
\begin{align*}
    \mu  \ = \ \EE{[X]} \ := \  \int_{-\infty}^{\infty} x p(x) dx.
\end{align*}
Another notation for expected value is $\langle X\rangle$. We use these three notations interchangeably.
\\
\textbf{$n$\textsuperscript{th} Moment:} For a continuous univariate probability distribution with probability density function $p(x)$, the $n$\textsuperscript{{\rm th}} moment $\mu'_n$ is
\begin{align*}
    \mu'_{n}  \ = \ \EE{[X^n]} \ := \ \int _{-\infty }^{+\infty }x^{n}p(x) dx .
\end{align*}
\\
\ \\
\textbf{$n$\textsuperscript{th} Centered Moment:} For a continuous univariate probability distribution with probability density function $p(x)$, the $n$\textsuperscript{{\rm th}} central moment $\mu_n$ is
\begin{align*}
    \mu_n  \ =\ \EE{[(X-\EE{[X]})^n]} \ := \ \int_{-\infty}^{+\infty}(x - \mu)^k p(x) dx.
\end{align*}
\end{defi}

We use the following standard result to convert the $n$\textsuperscript{th} centered moment to the $n$-level density when $n = 2$ and $n = 3$, respectively.
\begin{itemize}
\item The second centered moment $\mu_2$ satisfies $\mu_2 = \mu'_2-\mu^2$.
\item The third centered moment $\mu_3$ satisfies $\mu_3 = \mu'_3-3\mu\mu'_2+2\mu^3$.
\end{itemize}

The next result is used to obtain bounds in our case when the $n$ test functions may differ by replacing each with the same function, which is larger than each. This can only increase our error, and reduces the analysis of certain error terms to the case of Hughes-Miller \cite{HuM} when all test functions are the same. Finding such a function is one of the key new elements needed in generalizing the work from Hughes-Miller.

\begin{lem} \label{lem: functionBound}
    For all real-valued functions $f(x)$, $\lvert f(x) \rvert < f(x)^2+1$.
\end{lem}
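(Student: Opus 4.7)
The plan is to reduce the claimed functional inequality to a purely pointwise statement in one real variable: since the asserted bound $|f(x)| < f(x)^2 + 1$ must hold for every $x$ separately, it suffices to prove that for every real number $t$ one has $|t| < t^2 + 1$. Then substituting $t = f(x)$ yields the lemma, with no regularity or integrability assumption on $f$ needed.

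To establish the one-variable inequality $|t| < t^2 + 1$, I would complete the square in $|t|$. Writing
\begin{equation}
t^2 - |t| + 1 \;=\; \left(|t| - \tfrac{1}{2}\right)^2 + \tfrac{3}{4},
\end{equation}
the right-hand side is a sum of a non-negative square and the strictly positive constant $3/4$, hence is bounded below by $3/4 > 0$. Rearranging gives $t^2 + 1 > |t|$, which is exactly the pointwise bound. Applying this at $t = f(x)$ for each real $x$ proves the lemma.

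There is essentially no obstacle here; the only subtle point is that the strict inequality (rather than $\leq$) is preserved, since the discriminant of $t^2 - |t| + 1$ viewed as a quadratic in $|t|$ is $1 - 4 = -3 < 0$, so equality never occurs. This strictness is why the bound is safe to use inside the later moment estimates, where the lemma is invoked to dominate a test function by its square plus a constant so that the analysis reduces to the identical-test-function setting of Hughes--Miller \cite{HuM}; the strict inequality ensures no boundary case is lost when error terms are absorbed.
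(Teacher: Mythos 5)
Your proof is correct. It takes a slightly different route from the paper: you reduce to the scalar inequality $|t| < t^2 + 1$ and prove it by completing the square, writing $t^2 - |t| + 1 = \left(|t| - \tfrac12\right)^2 + \tfrac34 \ge \tfrac34 > 0$, whereas the paper argues by a two-case split, noting that for $|f(x)| \le 1$ the bound is immediate since $f(x)^2 + 1 > 1 \ge |f(x)|$, and for $|f(x)| > 1$ one has $|f(x)| < f(x)^2 \le f(x)^2 + 1$. Both arguments are one-liners; yours has the small advantage of treating all values of $t$ uniformly and making the strictness transparent (the discriminant of $u^2 - u + 1$ is negative, so equality never occurs), while the paper's case analysis is the quickest to state. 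One minor caveat: your closing remark that the strict inequality is what makes the bound ``safe'' in the later moment estimates overstates its role --- the subsequent argument only needs a pointwise dominating function with the right support and smoothness (achieved after multiplying by $\eta_\epsilon$), and a non-strict bound $|\hphi_j(x)| \le \hphi_j(x)^2 + 1$ would serve equally well; this does not affect the correctness of your proof of the lemma itself.
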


\begin{proof} When $|f(x)| \le 1$, the bound clearly holds, while when $|f(x)| > 1$ it is less than its own square. The claim follows by summing the two bounds.
\end{proof}

%%%%%%%%%%%%%%%%%%%%%%%%%%%%%%%%%%%%%%%%%%%%%%%%%%%%%%%%%%%%%%%%%%%%%%%%%%%%%%%%%%%%%%%%%%%%%%%%%%%%%%%%
%%%%%%%%%%%%%%%%%%%%%%%%%%%%%%%%%%%%%%%%%%%%%%%%%%%%%%%%%%%%%%%%%%%%%%%%%%%%%%%%%%%%%%%%%%%%%%%%%%%%%%%%
\subsection{Generalized Moment Sums}
For $D(f; \phi)$ as defined in \eqref{def:density} for the case $n=1$, the explicit formula applied to $D(f;\phi)$ gives (see
Equation 4.25 of \cite{ILS})
\begin{equation}\label{eqdfphiexpansion}
D(f;\phi)\ =\ \widehat{\phi}(0) + \foh \phi(0) - P(f;\phi) +
O\left( \frac{\log \log R}{\log R} \right),
\end{equation}
where
\begin{equation}\label{eq:P in terms of gl}
P(f;\phi)\ =\ \sum_{p \notdiv N} \gl_f(p) \hphi\left(\frac{\log p}{\log R}\right) \pfrac{}.
\end{equation}

When ${\rm supp}(\hphi) \subset (-1,1)$, a similar calculation as the one in \cite{ILS} shows that the $P(f,\phi)$ term does not contribute. Therefore,
\begin{equation}\lim_{N\to\infty} \<D(f;\phi)\>_\sigma \ =\  \hphi(0)+\foh \phi(0) \end{equation} for
any $\sigma \in \{+,-,\ast\}$ (in other words, for either the family with all even signs, all odd signs, or even and odd signs). Thus, to study the $n$\textsuperscript{th} centered moments with $n$ possibly distinct test functions $\phi_1,\phi_2, \dots, \phi_n$, we must evaluate
\begin{align}
\left\langle\prod_{j = 1}^n\left( D(f;\phi_j)-\<D(f;\phi_j)\>_\sigma\right)\right\rangle_\sigma
&\ = \ \left\langle \prod_{j=1}^n\left(-P(f;\phi_j) +
\ O\left( \frac{\log \log R}{\log R}\right)\right)\right\rangle_\sigma \nonumber\\
&\ = \ (-1)^n\left\langle \prod_{j=1}^n P(f;\phi_j)\right\rangle_\sigma \ + \ O\left( \frac{\log \log
R}{\log R} \right). \label{eq:centered mmts of D in terms of P}
\end{align}

The last line follows from Holder's inequality and
$\left\langle \prod_{j=1}^n P(f;\phi_j)\right\rangle_\sigma$ $\ll$ $1$ (which follows from
\eqref{eqdfphiexpansion} and $\<|D(f;\phi_j)|\>_\sigma\ll 1$).

We want to split by the sign of the functional equation and study the families with just even or just odd sign. This is significantly harder than studying the entire family of cuspidal newforms, as the sub-families with even and odd sign have terms that have contributions of the same magnitude but opposite sign; thus in the unsplit family these pieces cancel each other out, and do not need to be analyzed. As we have excellent averaging formulas for sums over the entire family (i.e., the unsplit case), we wish to convert sums over just the even or just the odd terms to related sums over everything. This can easily be done for the space of cuspidal newforms, as there is a good formula for the sign of the functional equation (see equation (3.5) of \cite{ILS}):
If $f\in\hkn$ and $N$ is prime, then
\begin{equation}\label{eq:signfneqexpils}
\gep_f \ = \ -\i^k \gl_f(N) \sqrt{N}.
\end{equation}
As $\gep_f = \pm 1$, \eqref{eq:signfneqexpils} implies $|\gl_f(N)| =
1/\sqrt{N}$. The multiplicative properties of the Fourier coefficients will be essential in our analysis.

Thus, with $\epsilon_f$ the sign of the functional equation for $L(s,f)$ (so it is 1 if the form is even and -1 if the form is odd), we have $(1+\epsilon_f)/2$ is 1 if $f$ is even and 0 otherwise; thus the presence of this factor restricts a sum over all forms to just a sum over the even terms, but does so by replacing a sum over just the even terms with two sums over all forms. Similarly if we instead use $(1 - \epsilon_f)/2$ we would have a sum over just the odd forms.

Using Lemma (2.7) from \cite{HuM} we obtain
\begin{align}
\sum_{f\in\hkpmn} \prod_{j=1}^n P(f;\phi_j) &\ = \
\sum_{f\in\hkn} \frac{1\pm \gep_f}{2} \prod_{j=1}^n P(f;\phi_j) \nonumber\\
& \ = \ \frac12 \sum_{f\in\hkn}\prod_{j=1}^n P(f;\phi_j)\ \mp\ \frac{1}{2}
\sum_{f\in\hkn} \i^k \sqrt{N}\gl_f(N)\prod_{j=1}^n P(f;\phi_j).
\end{align}

From \cite{HuM}, we know that $|\hkpn| \sim |\hkmn| \sim \frac12 |\hkn|$ as $N\to\infty$. Thus, we split by sign and obtain
\begin{equation}\label{eq:splitting}
\left\langle \prod_{j = 1}^nP(f;\phi_j)\right\rangle_\pm\ \sim\ \left\langle \prod_{j = 1}^nP(f;\phi_j)\rangle_\ast\ \mp\ \i^k \sqrt{N}
\langle \gl_f(N)\prod_{j = 1}^nP(f;\phi_j) \right\rangle_\ast.
\end{equation}

In conclusion, if $\supp(\hphi) \subset (-1,1)$, we have
\begin{equation}\label{eq:full_grp_in_terms_of_S1}
\lim_{N\to\infty}
\left\langle\prod_{j = 1}^n\left(D(f;\phi_j)-\<D(f;\phi_j)\>_\ast\right)\right\rangle_\ast
\ =\ (-1)^n \lim_{N\to\infty} S_1^{(n)}
\end{equation}
and
\begin{equation}\label{eq:nth_mmt_in_terms_of_S}
\lim_{N\to\infty}
\left\langle\prod_{j = 1}^n\left(D(f;\phi_j)-\<D(f;\phi_j)\>_\pm\right)\right\rangle_\pm
\ =\ (-1)^n \lim_{N\to\infty} S_1^{(n)}\ \pm\ (-1)^{n+1}
\lim_{N\to\infty} S_2^{(n)}
\end{equation}
(assuming all limits exist, and \cite{HuM} show that these limits do exist), where
\begin{equation}\label{eq:S_1}
S_1^{(n)}\ :=\ \sum_{p_1 \notdiv N , \dots, p_n \notdiv N}
\prod_{j=1}^n \left( \phir{j} \pfrac{j} \right) \< \prod_{j=1}^n
\gl_f(p_i) \>_\ast
\end{equation}
and
\begin{equation}\label{eq:S_2}
S_2^{(n)}\ :=\ \i^k \sqrt{N} \sum_{p_1 \notdiv N , \dots, p_n
\notdiv N} \prod_{j=1}^n \left( \phir{j} \pfrac{j} \right) \<
\gl_f(N) \prod_{j=1}^n \gl_f(p_i) \>_\ast.
\end{equation}

\begin{rek}
    The equations above assume that ${\rm supp}(\hphi) \subset (-1,1)$ which does not affect our theorem since that assumes that ${\rm supp}(\hphi) \subset (-1/(n-1),1/(n-1))$ for the $n$\textsuperscript{{\rm th}} moment, and $(-1/(n-1),1/(n-1)) \subset (-1,1)$ for all positive integers $> 1$.
\end{rek}

%%%%%%%%%%%%%%%%%%%%%%%%%%%%%%%%%%%%%%%%%%%%%%%%%%%%%%%%%%%%%%%%%%%%%%%%%%%%%%%%%%%%%%%%%%%%%%%%%%%%%%%%
%%%%%%%%%%%%%%%%%%%%%%%%%%%%%%%%%%%%%%%%%%%%%%%%%%%%%%%%%%%%%%%%%%%%%%%%%%%%%%%%%%%%%%%%%%%%%%%%%%%%%%%%
%%%%%%%%%%%%%%%%%%%%%%%%%%%%%%%%%%%%%%%%%%%%%%%%%%%%%%%%%%%%%%%%%%%%%%%%%%%%%%%%%%%%%%%%%%%%%%%%%%%%%%%%
%%%%%%%%%%%%%%%%%%%%%%%%%%%%%%%%%%%%%%%%%%%%%%%%%%%%%%%%%%%%%%%%%%%%%%%%%%%%%%%%%%%%%%%%%%%%%%%%%%%%%%%%
\section{Proof of Theorem~\ref{thm: generalized centered moment}}\label{sec: proof of first theorem}

The starting point of our analysis is identical to that in \cite{HuM}, which is not surprising since our work generalizes their result from the case of all equal test functions to the case where all $n$ test functions are arbitrary. We therefore concentrate only on the places where new arguments are needed, specifically on the complications that arise when generalizing. We thus do not include detailed proof for results that fall immediately from arguing as in \cite{HuM}; instead, we discuss the changes we need to make to the arguments in \cite{HuM}, and recommend interested readers to sections $3$, $4$, and Appendix E of \cite{HuM}.

%%%%%%%%%%%%%%%%%%%%%%%%%%%%%%%%%%%%%%%%%%%%%%%%%%%%%%%%%%%%%%%%%%%%%%%%%%%%%%%%%%%%%%%%%%%%%%%%%%%%%%%%
%%%%%%%%%%%%%%%%%%%%%%%%%%%%%%%%%%%%%%%%%%%%%%%%%%%%%%%%%%%%%%%%%%%%%%%%%%%%%%%%%%%%%%%%%%%%%%%%%%%%%%%%
\subsection{Generalizing The First S Term}

\begin{lem} \label{lem:GeneralizedExpressionS1}
Let $S_1^{(n)}$ be defined as in \eqref{eq:S_1}, and assume GRH for $L(s,f)$. Then if $\supp(\hphi_j) \subset (-\frac{1}{n}
\frac{2k-1}{k}, \frac{1}{n} \frac{2k-1}{k})$,
\begin{equation}
\twocase{\lim_{\substack{N\to\infty \\ N\ {\rm prime}}} S_1^{(n)}
\ = \ }{\sum_{ k = 1}^{(2m-1)!!} \prod_{\{a_l,b_l\} \in \mathcal{M}_{2m}^{k} \atop l = 1}^{m} \sigma^{2}_{\phi_{a_l} \phi_{b_l}}}{if $n=2m$ is even}{0}{if $n$ is odd,}
\end{equation}
where
\begin{equation}
\sigma^{2}_{\phi_a \phi_b} \ := \ 2\int_{-\infty}^{\infty} |y| \hphi_a(y)\hphi_b(y) \;\d y.
\end{equation}
Note that $a, b$ are distinct integers from the set $\{1, 2, \dots, 2m\}$.
\end{lem}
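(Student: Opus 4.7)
The plan is to adapt the argument in Sections~3, 4, and Appendix~E of \cite{HuM} to the present setting in which a single test function $\phi$ is replaced by an $n$-tuple $(\phi_1,\ldots,\phi_n)$. Conceptually the only new ingredient is bookkeeping of which test function sits in which slot; all arithmetic inputs (Petersson trace formula, Hecke multiplicativity, prime number theorem with partial summation, Kloosterman bounds) are inherited directly from \cite{HuM}, and all new analytic estimates will be reduced to the single-function ones by a majorization trick built on Lemma~\ref{lem: functionBound}.

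First I would expand $\langle \prod_{j=1}^n \gl_f(p_j) \rangle_\ast$ using Hecke multiplicativity: for $p \notdiv N$ we have $\gl_f(p)\gl_f(q) = \gl_f(pq)$ when $p \ne q$, and $\gl_f(p)^2 = \gl_f(p^2) + 1$. This rewrites $\prod_{j} \gl_f(p_j)$ as a sum indexed by set partitions of $\{1,\ldots,n\}$ recording which slots share a common prime value, with each block of size $b$ contributing a polynomial expression in $\gl_f(p), \gl_f(p^2),\ldots,\gl_f(p^b)$ and the constant $1$. Petersson's trace formula then evaluates each $\langle \gl_f(m)\rangle_\ast$ as a Kronecker delta plus a Kloosterman term, exactly as in \cite{HuM}.

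The dominant contribution comes from the partitions consisting entirely of pairs, i.e.\ perfect matchings. For a fixed matching $\mathcal{M}_{2m}^k \in \mathbb{M}_{2m}$, the equalities $p_{a_l} = p_{b_l}$ collapse the $n$-fold sum into a product of $m$ one-dimensional sums
\begin{equation}
\prod_{l=1}^m \sum_{p \notdiv N} \hphi_{a_l}\!\left(\tfrac{\log p}{\log R}\right) \hphi_{b_l}\!\left(\tfrac{\log p}{\log R}\right) \frac{4 \log^2 p}{p (\log R)^2},
\end{equation}
which by the prime number theorem with partial summation (following the derivation leading to Theorem~\ref{thm:special case moments} in \cite{HuM}) tends to $\prod_{l=1}^m \sigma^{2}_{\phi_{a_l}\phi_{b_l}}$. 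Summing over the $(2m-1)!!$ matchings in $\mathbb{M}_{2m}$ produces the claimed expression. When $n$ is odd there are no perfect matchings of $\{1,\ldots,n\}$, so this main term is empty and the claim reduces to the error estimate.

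The remaining work, and the only genuine obstacle, is to show that every other contribution, namely partitions containing a block of size $\geq 3$ or an unmatched singleton, together with Kloosterman errors from Petersson, is $o(1)$ under the support hypothesis $\supp(\hphi_j) \subset (-\tfrac{1}{n}\tfrac{2k-1}{k}, \tfrac{1}{n}\tfrac{2k-1}{k})$. In \cite{HuM} the analogous estimates bound mixed products of a single $\hphi$ using elementary inequalities like $|\hphi|^r \le (\hphi^2+1)^{r/2}$; here the slots hold different functions, so I would use Lemma~\ref{lem: functionBound} together with the Cauchy-Schwartz inequality to dominate $\prod_{i} |\hphi_{j_i}(y_i)|$ by a sum of products of $\hphi_{j_i}^2 + 1$. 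After this majorization each error estimate splits into one-variable sums each indexed by a single $\hphi_j$, which fall directly under the estimates already established in \cite{HuM}; the support condition depends only on the union of the individual supports, so it propagates unchanged.
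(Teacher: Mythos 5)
Your proposal follows essentially the same route as the paper's proof: expand $\langle\prod_j \gl_f(p_j)\rangle_\ast$ by Hecke multiplicativity, extract the main term from the perfect matchings of $\{1,\dots,2m\}$, evaluate each paired prime sum by the prime number theorem to get $\sigma^2_{\phi_{a_l}\phi_{b_l}}$, sum over the $(2m-1)!!$ matchings (with the odd case vanishing for lack of matchings), and reduce all remaining contributions to the single-test-function error estimates of \cite{HuM} via a majorant built from Lemma~\ref{lem: functionBound}. The one step you state too quickly is the majorization: the dominating function $1+\sum_j\hphi_j(x)^2$ is \emph{not} compactly supported, so your assertion that ``the support condition propagates unchanged'' is not automatic --- the Hughes--Miller error analysis you want to quote is valid only for test functions whose Fourier transforms are supported in the prescribed interval $(-\frac{1}{n}\frac{2k-1}{k},\frac{1}{n}\frac{2k-1}{k})$, and the ``$+1$'' destroys exactly that property (restricting the sums by hand to the union of the original supports would instead cost you smoothness). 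The paper repairs this by choosing $\epsilon>0$ so small that every $\supp(\hphi_j)$ lies in $(-\omega+\epsilon,\omega-\epsilon)$ and multiplying $1+\sum_j\hphi_j(x)^2$ by the smooth bump $\eta_\epsilon$ of \eqref{eq: eta(x)}, which is at least $1$ on $(-\omega+\epsilon,\omega-\epsilon)$ and vanishes for $|x|\ge\omega$; the resulting $\hpsi_\epsilon$ is infinitely differentiable, supported in $(-\omega,\omega)$, and still pointwise larger than every $|\hphi_j|$, hence admissible for the \cite{HuM} machinery. With that modification (in place of, or in addition to, your Cauchy--Schwartz splitting) your argument coincides with the paper's.
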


\begin{proof}
Similar to the approach of \cite{HuM}, we first split the sum over primes into sums over powers of distinct primes: $p_1\cdots p_n = q_1^{n_1} \cdots q_\ell^{n_\ell}$
with the $q_j$ distinct. Thus, we obtain
\begin{equation} \label{eq: lamda term}
\prod_{j=1}^n \gl_f(p_i)\ =\ \prod_{j=1}^\ell \gl_f(q_j)^{n_j}.
\end{equation}

By the multiplicativity of the $\lambda_f$, proven in Lemma (2.8) of \cite{HuM}, we can rewrite $\gl_f(q_j)^{n_j}$ as a sum of $\gl_f(q_j^{m_j})$. In this sum we have $m_j \le n_j$ and $m_j$ and $n_j$ are non-negative integers of the same parity (i.e., either both even or both odd).

For $\prod_{i=1}^n \gl_f(p_i)$ to have a constant term (i.e., $\gl_f(1)$), $p_1\cdots p_n$ must equal a perfect square. We call this constant term the main term. We can only get a main term  when $n=2m$ is an even integer because only here can we pair up primes (as they may potentially occur for an even number of times). We first consider the case when each $n_j = 2$ so that each prime occurs exactly twice. Since each $\phir{}$ may contribute differently (as the test functions are not necessarily the same), we need to separate their pairings into cases and evaluate the sum separately.

To illustrate the method, rather than doing the general case, we determine the different cases and sums of the main term for $S_1^{(4)}$ below. This calculation easily generalizes to all even $n=2m$.
\begin{align}\label{eq:S_1^{(4)}}
    S_1^{(4)} = \sum_{p_1 \notdiv N , \dots, p_4 \notdiv N}\prod_{j=1}^4 \left( \phir{j} \pfrac{j} \right) \< \prod_{j=1}^4
\gl_f(p_j) \>_\ast
\end{align}

Note that we still assume each $n_j = 2$. Thus, to pair terms up, we have the following three cases. \\

\noindent \textbf{Case 1. $p_1 = p_2, \ p_3 = p_4$:}
\footnotesize
\begin{align}\label{eq: S_1^4case1}
    \lim_{\substack{N\to\infty \\ N\ {\rm prime}}} & \left(\sum_{p_1\notdiv N}  \hphi_1\left(\frac{\log{p_1}}{\log R}\right)\hphi_2\left(\frac{\log p_1}{\log R}\right)\left(\frac{2\log{p_1}}{\sqrt{p_1}\log{R}}\right)^2 \right) \times \left(\sum_{p_3\notdiv N} \hphi_3\left(\frac{\log{p_3}}{\log R}\right)\hphi_4\left(\frac{\log p_3}{\log R}\right)\left(\frac{2\log{p_3}}{\sqrt{p_3}\log{R}}\right)^2 \right).
\end{align}
\normalsize
\ \\

\noindent \textbf{Case 2. $p_1 = p_3, \ p_2 = p_4$:}
\footnotesize
\begin{align}\label{eq: S_1^4case2}
 \lim_{\substack{N\to\infty \\ N\ {\rm prime}}} & \left(\sum_{p_1\notdiv N} \hphi_1\left(\frac{\log{p_1}}{\log R}\right)\hphi_3\left(\frac{\log p_1}{\log R}\right)\left(\frac{2\log{p_1}}{\sqrt{p_1}\log{R}}\right)^2 \right) \times  \left(\sum_{p_2\notdiv N} \hphi_2\left(\frac{\log{p_2}}{\log R}\right)\hphi_4\left(\frac{\log p_2}{\log R}\right)\left(\frac{2\log{p_2}}{\sqrt{p_2}\log{R}}\right)^2 \right).
\end{align}
\normalsize
\ \\

\noindent \textbf{Case 3. $p_1 = p_4, \ p_2 = p_3$:}
\footnotesize
\begin{align}\label{eq: S_1^4case3}
 \lim_{\substack{N\to\infty \\ N\ {\rm prime}}} &  \left(\sum_{p_1\notdiv N} \hphi_1\left(\frac{\log{p_1}}{\log R}\right)\hphi_4\left(\frac{\log p_1}{\log R}\right)\left(\frac{2\log{p_1}}{\sqrt{p_1}\log{R}}\right)^2 \right)  \times \left(\sum_{p_2\notdiv N} \hphi_2\left(\frac{\log{p_2}}{\log R}\right)\hphi_3\left(\frac{\log p_2}{\log R}\right)\left(\frac{2\log{p_2}}{\sqrt{p_2}\log{R}}\right)^2 \right).
\end{align}
\normalsize

\ \\

Using the Prime Number Theorem (to first order the number of primes up to $x$ is asymptotic to $x/\log x$), we can evaluate the prime sums, and using the evenness of $\hphi_j$, the contribution from each term is
\begin{align}\label{eq:2mchoosempairs}
     \lim_{\substack{N\to\infty \\ N\ {\rm prime}}} \left(\sum_{p_k\notdiv N} \hphi_i\left(\frac{\log{p_k}}{\log R}\right)\hphi_j\left(\frac{\log p_k}{\log R}\right)\left(\frac{2\log{p_k}}{\sqrt{p_k}\log{R}}\right)^2 \right)\ & =\ 2\int_{-\infty}^\infty \hphi_i(y)\hphi_j(y) |y|\; \d y %%%%%\label{eq: n_j = 2 case}
     \nonumber\\
     & = \ \sigma_{\phi_i\phi_j}^2.     %%%%%%%%%%%%%%\label{eq: notationDifference}
\end{align}

\begin{rek}
    The notation in \eqref{eq:2mchoosempairs} is a little different than \emph{\cite{HuM}}, where what they call $\sigma_{\phi}^2$ we call $\sigma_{\phi\phi}^2$. The reason is they only considered the case where all the test functions were the same, so each pair was always $\phi$ with $\phi$; our case is more general, and the notation must reflect which test functions are involved. We emphasize here that the ordering that the test function appear in the subscript does not matter, but we label them from small to large for consistency.
\end{rek}

The integral is the variance $\sigma^2_{\phi_i,\phi_j}$ because of the support condition on $\hphi_i$ and $\hphi_j$.

Using \eqref{eq:2mchoosempairs}, the sum of the three main terms of $S_1^{(4)}$ can be simplified into
\begin{equation}\label{eq: simplifiedS_1^4}
    \sigma_{\phi_1\phi_2}^2\sigma_{\phi_3\phi_4}^2+\sigma_{\phi_1\phi_3}^2\sigma_{\phi_2\phi_4}^2+\sigma_{\phi_1\phi_4}^2\sigma_{\phi_2\phi_3}^2.
\end{equation}

Note that if $\phi = \phi_1 = \phi_2 = \phi_3 = \phi_4$, \eqref{eq: simplifiedS_1^4} is just $3\sigma_{\phi\phi}^2$, which agrees with the equation in \cite{HuM}. We can generalize \eqref{eq: simplifiedS_1^4} for all main terms of even $n = 2m$:
\begin{equation}\label{eq: generalsumS_1}
    \sum_{k = 1}^{(2m-1)!!} \prod_{\substack{\{a_j,b_j\} \in \mathcal{M}_{2m}^k \\ j = 1}}^{m} \sigma^{2}_{\phi_{a_j} \phi_{b_j}}.
\end{equation}

Aside from the main term, the sub-terms would have some $n_j \geq 4$ in \eqref{eq: lamda term}. In this case, we obtain similar formula as \eqref{eq:2mchoosempairs}, but with different possibilities for pairing . The sums we have will look similar to the following, assuming $n_j = c_1+c_2+\cdots+c_n, \ c_j \in \mathbb{N}$:
\begin{equation} \label{eq: other n_j terms}
    \sum_{p\notdiv N} \hphi_1\left(\frac{2\log p}{\sqrt{p} \log R}\right)^{c_1}\hphi_2\left(\frac{2\log p}{\sqrt{p} \log R}\right)^{c_2}\cdots\hphi_n\left(\frac{2\log p}{\sqrt{p} \log R}\right)^{c_n}
    \left(\pfrac{}\right)^{n_j}.
\end{equation}
By the Prime Number Theorem, when $n_j = 2$, \eqref{eq: other n_j terms} is $O(1)$; however, when $n_j \ge 4$, \eqref{eq: other n_j terms} is $O\left(\log^{-4} R\right)$. Therefore, the contribution where at least one $n_j \ge 4$ is negligible.

The other contributions that arise from expanding $\prod_{i=1}^n \gl_f(p_i)$
are of the form
\footnotesize
\begin{align}
\sum_{\substack{q_1 \notdiv N , \dots, q_\ell \notdiv N \\ q_j
{\rm distinct}}} \prod_{j=1}^\ell & \ \hphi_1\left(\frac{2\log p}{\sqrt{p} \log R}\right)^{c_1}\hphi_2\left(\frac{2\log p}{\sqrt{p} \log R}\right)^{c_2}\cdots\hphi_n\left(\frac{2\log p}{\sqrt{p} \log R}\right)^{c_n} \times \left(\frac{2 \log q_j}{\sqrt{q_j} \log R}
\right)^{n_j} \< \gl_f(q_1^{m_1} \cdots q_\ell^{m_\ell}) \>_\ast.
\end{align}
\normalsize
To show that the other contributions from expanding $\prod_{i=1}^n \gl_f(p_i)$ is negligible, we use the fact that $\hphi_j$ is supported in an open interval $(-\frac{1}{n} \frac{2k-1}{k}, \frac{1}{n} \frac{2k-1}{k})$ to construct a new test function $\hpsi$ that is pointwise larger than each individual $|\hphi_i|$. We show that such a function exists, and our bound on this error term (which is larger than the actual error) follows directly  from \cite{HuM} if we replace each individual $\hphi_i$'s with $\hpsi$ and calculate the contribution following the steps laid out there. We thus obtain an upper bound for the actual contributions which is small enough to show that these terms are negligible. See Sections 3, 4, and Appendix E of \cite{HuM}.

By Lemma \ref{lem: functionBound},
\begin{equation}\label{eq: psiUnmodified}
    1+\sum_{j = 1}^n \hphi_j(x)^2 \ >\ \lvert \hphi_j(x) \rvert \ \text{for all } j.
\end{equation}

Although later analysis doesn't require $\hpsi$ to be Schwartz function, we still need to modify \eqref{eq: psiUnmodified} to ensure that it is infinitely differentiable and has support in $(-\frac{1}{n}\frac{2k-1}{k}, \frac{1}{n} \frac{2k-1}{k})$ like all $\hphi_i$. We have to bound the contribution from other terms, and the analysis in \cite{HuM} showed how to obtain an explicit formula for these other terms. However, for us, it doesn't matter how we get to such an explicit formula; we just need to bound these quantities

If $\supp(\hphi_j) = (-\omega,\omega)$, we define a useful function $\eta_\epsilon(x)$. We will multiply the function on the left hand side of \eqref{eq: psiUnmodified} by this function to obtain the needed properties. In particular, we will have a Schwartz function which is at least 1 and larger than all the individual $\hphi_i$'s where they are supported, and decays rapidly to zero. Consider for some very small $\epsilon > 0$, chosen so that the support of each $\hphi$ is contained in $(-\omega + \epsilon, \omega - \epsilon)$:
\begin{equation}\label{eq: eta(x)}
    \eta_\epsilon(x)\ :=\ \begin{cases}
    e^{\left(1 / (\omega^2 - (\omega - \epsilon)^2)\right)} \times
    e^{-\left(1 / (\omega^2 - x^2)\right)}
    , & \text{for } \lvert x \rvert < \omega\\
        \text{0}, & \text{for } \lvert x \rvert \geq \omega.
        \end{cases}
\end{equation}

Note that for every $0<\epsilon<\omega$, $1 \leq \eta_\epsilon(x)$ for every $x \in (-\omega+\epsilon, \omega-\epsilon)$. Thus, the product of \eqref{eq: psiUnmodified} and \eqref{eq: eta(x)} is still point-wise larger than any $\lvert \hphi_j(x) \rvert$. At the same time, $\eta_\epsilon(x)$ vanishes at $(-\omega, \omega)$ and is infinitely differentiable, so the product would also have appropriate support and is infinitely differentiable.

\begin{rek}
    Since the function is decreasing and positive
at the endpoints of the subinterval, clearly we can multiply by a sufficiently large $A$ to make it at least 1 in that subinterval. In fact the smallest $A$ that works is 1 divided by the value at the end of the subinterval, as our function is decreasing as $|x|$ increases, which is where our choice came from.
\end{rek}

Since $\eta_\epsilon(x)$ and the expression of \eqref{eq: psiUnmodified} are both infinitely differentiable, their product is  infinitely differentiable with support  $(-\omega, \omega)$ since $\supp(\eta_\epsilon(x)) = (-\omega, \omega)$. In conclusion, the product satisfies all our required properties. Thus, we define the new test function $\hpsi_{\epsilon}(x)$ as
\begin{equation} \label{eq: psi final}
    \hpsi_\epsilon(x)\ :=\ \eta_\epsilon(x)\times\left(1+\sum_{j = 1}^n \hphi_j(x)^2\right).
\end{equation}

To finish the proof, argue as in \cite{HuM} after replacing every $\hphi_j(x)$ with $\hpsi_\epsilon(x)$. All quantities are larger, but these new functions are admissible as they are infinitely differentiable and have compact support, which are the only properties that analysis assumes.
\end{proof}

\begin{rek}
    Theorem E.1 of \cite{HuM} extends the support in Lemma \ref{lem:GeneralizedExpressionS1} to $(-\frac{2}{n}, \frac{2}{n})$ for $2k \ge n$ by assuming GRH for Dirichlet $L$-functions. Same as in \cite{HuM}, support of $(-\frac{1}{n} \frac{2k-1}{k}, \frac{1}{n} \frac{2k-1}{k})$ is sufficient for $n \geq 3$, $k \geq 2$ for our purposes because to prove Theorem~\ref{thm: generalized centered moment} we need to handle support up to $\frac{1}{n-1}$, and $\frac{1}{n-1} \leq \frac{1}{n}\frac{2k-1}{k}$ in $n$ and $k$ satisfy the conditions mentioned earlier. Therefore, Lemma~\ref{lem:GeneralizedExpressionS1} evaluates $S_1^{(n)}$ for $\sigma < \frac1{n-1}$. If $n=2$, however, then $\frac1{n-1} > \frac{1}{n}\frac{2k-1}{k}$, we will need the generalized Theorem E.1 of \cite{HuM}. However, since this result follows almost directly by just replacing each $\phi$ by distinct $\phi_j$ from the proof in \cite{HuM}, we will not expand the proof here. Instead, we refer the readers to Appendix E of \cite{HuM} if interested.
\end{rek}

%%%%%%%%%%%%%%%%%%%%%%%%%%%%%%%%%%%%%%%%%%%%%%%%%%%%%%%%%%%%%%%%%%%%%%%%%%%%%%%%%%%%%%%%%%%%%%%%%%%%%%%%
%%%%%%%%%%%%%%%%%%%%%%%%%%%%%%%%%%%%%%%%%%%%%%%%%%%%%%%%%%%%%%%%%%%%%%%%%%%%%%%%%%%%%%%%%%%%%%%%%%%%%%%%
\subsection{Generalizing The Second S Term}
All that remains to prove Theorem \ref{thm: generalized centered moment} is
to show that if $\sigma<\frac{1}{n-1}$ then
\begin{align}
    \lim_{\substack{N\to\infty \\ N\ {\rm prime}}} S_2^{(n)}\ &=\
     (-1)^{n-1} 2^{n-1}\left[ \int_{-\infty}^\infty
    \phi_1(x)\cdots \phi_n(x)
    \frac{\sin 2\pi x}{2\pi x} \;\d x \right.  \left.  - \foh \phi_1(0)\cdots \phi_n(0) \right].
\end{align}

We highlight the changes needed to generalize the argument in \cite{HuM}. Instead of defining of $\Phi(x)$ as $\phi(x)^n$, we now define it as $\phi_1(x)\cdots\phi_n(x)$ and the result falls directly from this replacement if one follows the steps laid out in \cite{HuM}. To bound the error term, we use the same technique of what we did for bounding the error term of $S_1$ -- replacing every $\phi_j$ with $\psi_{\epsilon}(x) = (1+\prod_{j=1}^n \phi_j(x))\times\eta_{\epsilon}(x)$. This does not affect the proof as $\psi_{\epsilon}(x)$ has all of our desired qualities.

Combining the two generalized terms $S_1^{(n)}$ and $S_2^{(n)}$, we obtain Theorem~\ref{thm: generalized centered moment}. \hfill $\Box$

%%%%%%%%%%%%%%%%%%%%%%%%%%%%%%%%%%%%%%%%%%%%%%%%%%%%%%%%%%%%%%%%%%%%%%%%%%%%%%%%%%%%%%%%%%%%%%%%%%%%%%%%
%%%%%%%%%%%%%%%%%%%%%%%%%%%%%%%%%%%%%%%%%%%%%%%%%%%%%%%%%%%%%%%%%%%%%%%%%%%%%%%%%%%%%%%%%%%%%%%%%%%%%%%%
%%%%%%%%%%%%%%%%%%%%%%%%%%%%%%%%%%%%%%%%%%%%%%%%%%%%%%%%%%%%%%%%%%%%%%%%%%%%%%%%%%%%%%%%%%%%%%%%%%%%%%%%
%%%%%%%%%%%%%%%%%%%%%%%%%%%%%%%%%%%%%%%%%%%%%%%%%%%%%%%%%%%%%%%%%%%%%%%%%%%%%%%%%%%%%%%%%%%%%%%%%%%%%%%%

\section{Proof of Theorem~\ref{thm: generalized less support}}\label{sec: proof for second theorem (less support)}

Since we handled the $S_1^{(n)}$ term in Lemma \ref{lem:GeneralizedExpressionS1}, all that we need to show is that the contribution from the $S_2^{(n)}$ term is negligible for $\supp(\hphi) \subset(-\frac{1}{n}\frac{2k-1}{k} ,\frac{1}{n}\frac{2k-1}{k})$. This error analysis is very similar to the argument for $S_1^{(n)}$. We can replace every $\hphi_j$ with $\hpsi_{\epsilon}$ as defined in \eqref{eq: psi final} and follow the steps of Lemma 3.2 of \cite{HuM}.

%%%%%%%%%%%%%%%%%%%%%%%%%%%%%%%%%%%%%%%%%%%%%%%%%%%%%%%%%%%%%%%%%%%%%%%%%%%%%%%%%%%%%%%%%%%%%%%%%%%%%%%%
%%%%%%%%%%%%%%%%%%%%%%%%%%%%%%%%%%%%%%%%%%%%%%%%%%%%%%%%%%%%%%%%%%%%%%%%%%%%%%%%%%%%%%%%%%%%%%%%%%%%%%%%
%%%%%%%%%%%%%%%%%%%%%%%%%%%%%%%%%%%%%%%%%%%%%%%%%%%%%%%%%%%%%%%%%%%%%%%%%%%%%%%%%%%%%%%%%%%%%%%%%%%%%%%%
%%%%%%%%%%%%%%%%%%%%%%%%%%%%%%%%%%%%%%%%%%%%%%%%%%%%%%%%%%%%%%%%%%%%%%%%%%%%%%%%%%%%%%%%%%%%%%%%%%%%%%%%
\section{Bounding Order of Vanishing}\label{sec: bounds order of vanishing}

\begin{defi}[Probability function]
    We define $p_j(N)$ as the percent of cuspidal newforms in the family with square-free level $N$ that have exactly $j$ zeros at the central point.
\end{defi}

Note trivially $\sum_{j \ge 0} p_j(N) = 1$. We frequently bound sums such as $\sum_{j \ge r} p_j(N)$, the percent that have rank at least $r$. If our family is SO(even) then only terms with $j$ even can be non-zero (and analogously for SO(odd)).

\begin{rek} Typically after the first few initial papers, subsequent progress is small, usually only slight improvements in later digits. We see this in the following tables in the improvement in bounding the percentage of forms that vanish to order exactly 5; our result is .0658044, improving but only slightly the earlier result of 0.06744290. However, the value of our approach becomes apparent as we look at higher and higher vanishing. Our result is better by about a factor of 2 for vanishing to degree 6, improving the previous bound of 0.0157687 to 0.00853841, and for vanishing of exact order 7 we go from 0.0299746 to 0.00221997, which is more than an order of magnitude better. The results are even stronger for higher vanishing. \end{rek}

%%%%%%%%%%%%%%%%%%%%%%%%%%%%%%%%%%%%%%%%%%%%%%%%%%%%%%%%%%%%%%%%%%%%%%%%%%%%%%%%%%%%%%%%%%%%%%%%%%%%%%%%
%%%%%%%%%%%%%%%%%%%%%%%%%%%%%%%%%%%%%%%%%%%%%%%%%%%%%%%%%%%%%%%%%%%%%%%%%%%%%%%%%%%%%%%%%%%%%%%%%%%%%%%%
\subsection{Bounds from Densities}

Often previous works show how to obtain bounds but do not actually enumerate them. Thus, in order to compare the two methods we have developed to obtain bound with what was known, we made the formulas explicit for bounds arising from the $1$- and $2$-level densities below, and calculated the bounds for various choices of test functions (using the optimal test functions when known).

For the $1$-level density,
\begin{equation}
	\sum_{m = 1}^\infty m p_m(N) \ \leq \ \frac{1}{\phi(0)} \int_\R \phi(x) W_{1, G} (x) dx dy \label{eq:boundorder1} \ =\  \mathbb{E}[(D_1 (f; \phi), Q)].
\end{equation}

We apply \eqref{eq:boundorder1} and obtain the following equations for bounds from $1$-level density:
\begin{align}
    p_{2m}(N)\ \leq\ \frac{\mathbb{E}[(D_1 (f; \phi), Q)]}{2m} \qquad &\text{if } G = \SpOrthE,\nonumber\\
    p_{2m+1}(N)\ \leq\  \frac{\mathbb{E}[(D_1 (f; \phi), Q)]}{2m+1} \qquad &\text{if } G = \SpOrthO.
\end{align}

For the $2$-level densities:
\begin{align}
    & \sum_{m = 1}^\infty \left(2m(2m - 2) p_{2m}(N) + (2m)^2 p_{2m+1}(N) \right) \ \leq \ \frac{1}{\Phi(0,0)} \int_{\R^2} \Phi(x, y) W_{2, G} (x, y) dx dy. \label{eq:boundorder2}
\end{align}

We apply \eqref{eq:boundorder2} and obtain the following equations for bounds from $2$-level density,
\begin{align}
    p_{2m}(N)\ \leq\  \frac{\mathbb{E}[(D_2 (f; \phi), Q)]}{2m(2m-2)} \qquad &\text{if } G = \SpOrthE,\nonumber\\
    p_{2m+1}(N)\ \leq\ \frac{\mathbb{E}[(D_2 (f; \phi), Q)]}{2m(2m)} \qquad &\text{if } G = \SpOrthO.
\end{align}

%%%%%%%%%%%%%%%%%%%%%%%%%%%%%%%%%%%%%%%%%%%%%%%%%%%%%%%%%%%%%%%%%%%%%%%%%%%%%%%%%%%%%%%%%%%%%%%%%%%%%%%%
%%%%%%%%%%%%%%%%%%%%%%%%%%%%%%%%%%%%%%%%%%%%%%%%%%%%%%%%%%%%%%%%%%%%%%%%%%%%%%%%%%%%%%%%%%%%%%%%%%%%%%%%

\subsection{Bounds from Centered Moments}\label{subsec: bounds from centered moments sum}
Consider the term
\begin{center}
    $D(f;\phi)-\<D(f;\phi)\>_\sigma$.
\end{center}
If $L(s, f)$ has $j$ zeros at the central point, we can throw away the contribution from all other zeros and obtain the following (by Theorem 1.1 of \cite{HuM}):
\begin{equation}\label{eq: boundCoefficients}
        D(f;\phi)-\<D(f;\phi)\>_\sigma\ \geq\ j\phi(0)-\<D(f;\phi)\>_\sigma \\
    \ = \ j\phi(0)-\left(\hphi(0)+\frac{1}{2}\phi(0)\right).
\end{equation}

\begin{rek}\label{rek: limits on bounds}
    For our application, we need the right-hand side of \eqref{eq: boundCoefficients} to be positive because when we raise terms to higher even powers later to compute the bounds from higher even centered moments, we may violate the inequality. To illustrate this potential violation, consider for instance that for a particular choice of $f$, $\<D(f;\phi)\>_\sigma = 1$, $D(f;\phi) = 1.5$, and the contribution from the zeros at the central point is $0.25$. When we evaluate the second centered moment, if we only consider the contribution of the zeros and square, we end up with $0.5625$; this is larger than the $0.25$ we obtain if we keep everything and use $D(f; \phi)$. This is an issue because the term is larger than what the upper bound, indicating of course that we cannot take this as an upper bound as squaring the negative sign reversed the direction of the inequality. Therefore, there are restrictions on the order of vanishing bounds we can obtain for each test function $\phi$. Specifically, the order of vanishing must be greater than $(\hphi(0)+\phi(0)/2)/(\phi(0))$. We denote the smallest order of vanishing that we can evaluate by $c_{\phi}$.
\end{rek}

\begin{defi}[Smallest Order of Vanishing]
    We denote by $c_{\phi_j}$ the smallest order of vanishing that can be evaluated using the even centered moment for a given test function $\phi_j$. Specifically, $c_{\phi_j}$ is the smallest integer that satisfies the following inequality
    \begin{equation}
        c_{\phi_j}\  >\  \frac{\hphi_j(0)+\frac{1}{2}\phi_j(0)}{\phi_j(0)}.
    \end{equation}
\end{defi}

\begin{rek}
     For our naive test function $\varphi_{\rm naive}$, $c_{\varphi_{\rm naive}} = 5 > (\hvarphi_{\rm naive}(0)+\varphi_{\rm naive}(0)/2)/\varphi_{\rm naive}(0) = 4.5$. For our better test function $\phi$ generated by $g = \sin^2(x)$ for $|x| > 1/8$, $c_{\phi} = 8 > (\hphi(0)+\phi(0)/2)/\phi(0) = 7.69993$.
\end{rek}

Combining Remark \ref{rek: limits on bounds} and \eqref{eq: boundCoefficients}, we obtain the following for even moment $n=2m$ and $f\in H_k^\sigma(N)$ where all test functions $\phi$ are the same, and we assume $r \geq c_{\phi}$:
\begin{align}
    & \left\langle\left(  D(f;\phi)-\<D(f;\phi)\>_\sigma\right)^{2m}\right\rangle_\sigma \ = \ \sum_{\substack{j=0 \\ f \text{ such that } L(s,f) \text{ has } j \\ \text{ order of vanishing }}}^{\infty} \frac{1}{|H_k^{\sigma}(N)|}\left(D(f;\phi)-\<D(f;\phi)\>_\sigma\right)^{2m} \nonumber\\
    &\geq \sum_{\substack{j=r \\ f \text{ such that } L(s,f) \text{ has } j \\ \text{ order of vanishing }}}^{\infty} \frac{1}{|H_k^{\sigma}(N)|}\left(r\phi(0)-\left(\hphi(0)+\frac{1}{2}\phi(0)\right)\right)^{2m}. \label{eq: finalBoundSum no p_j}
\end{align}

%%deleted
%    &\geq \sum_{\substack{j=c_{\phi} \\ f \text{ has exactly } j \text{ zeros } \\ \text{at the central point}}}^{\infty} %\frac{1}{|H_k^{\sigma}(N)|}\left(j\phi(0)-\left(\hphi(0)+\frac{1}{2}\phi(0)\right)\right)^{2m} \nonumber\\
%    &\geq \sum_{\substack{j=k \\ f \text{ has exactly } j \text{ zeros } \\ \text{at the central point}}}^{\infty} %\frac{1}{|H_k^{\sigma}(N)|}\left(j\phi(0)-\left(\hphi(0)+\frac{1}{2}\phi(0)\right)\right)^{2m} \nonumber\\

Since we group together the $f$ with a certain order of vanishing and divide by the cardinality of the family, the sum over $f$ divided by the family size becomes $p_j(N)$ times the contribution. Thus, we can simplify \eqref{eq: finalBoundSum no p_j} to
\begin{align}
    & \left\langle\left(  D(f;\phi)-\<D(f;\phi)\>_\sigma\right)^{2m}\right\rangle_\sigma
    \ \geq \ \sum_{j=r}^{\infty} p_j(N)\left(r\phi(0)-\left(\hphi(0)+\frac{1}{2}\phi(0)\right)\right)^{2m}. \label{eq: finalBoundSum}
\end{align}

\begin{rek}\label{rek: evenMomentBetter}
    We use \eqref{eq: finalBoundSum} to obtain bounds for $p_j(N)$ since $$\left\langle\left(D(f;\phi)-\<D(f;\phi)\>_\sigma\right)^{2m}\right\rangle_\sigma \ \ {\rm and}\ \  \left(r\phi(0)-\left(\hphi(0)+\frac{1}{2}\phi(0)\right)\right)$$  are both easily computable for a given pair $\phi, \ \hphi$. We emphasize here that \eqref{eq: finalBoundSum} only holds for even centered moments because the contribution from every $f$ is always positive in this case as every $D(f;\phi)-\<D(f;\phi)\>_\sigma$ is raised to an even power. Thus, when we drop contributions from $f$ that has fewer than $r$ zeros at the central point, we won't drop negative terms, so the inequality preserves. This is the main reason why it is better to study even moments than odd moments.
\end{rek}

\begin{rek}\label{rek: generalizeBoundSum}
    We use equation \eqref{eq: finalBoundSum} in the special case where all test functions $\phi$ are identical. We note here that we may run into issues if we work with the most generalized case where we treat all $\phi_1, \phi_2, \dots, \phi_{2m}$ as distinct because when we drop contributions from $f$ that has fewer than $r$ zeros at the central point, we may be dropping negative terms, so the inequality will not hold for certain. A solution to this problem is to make sure all test functions occur for an even number of times. In other words, we treat all $\phi_1, \phi_2, \dots, \phi_{m}$ as distinct when using the $2m$\textsuperscript{{\rm th}} centered moment to obtain upper bounds for order of vanishing as shown below.
\end{rek}

For even moment $n=2m$ and $f\in H_k^\sigma(N)$, we have the following more generalized equation, and again we assume $r \geq \text{max}(c_{\phi_1}\dots c_{\phi_m}$):
\begin{align}
    & \left\langle\prod_{s=1}^{m}\left( D(f;\phi_s)-\<D(f;\phi_s)\>_\sigma\right)^{2}\right\rangle_\sigma\ =\ \sum_{\substack{j=0 \\ f \text{ such that } L(s,f) \text{ has } j \\ \text{ order of vanishing }}}^{\infty} \frac{1}{|H_k^{\sigma}(N)|}\left(\prod_{s=1}^m D(f;\phi_s)-\<D(f;\phi_s)\>_\sigma\right)^{2} \nonumber\\
    &\geq \sum_{\substack{j=r \\ f \text{ such that } L(s,f) \text{ has } j \\ \text{ order of vanishing }}}^{\infty}  \frac{1}{|H_k^{\sigma}(N)|}\left(\prod_{s=1}^m r\phi_s(0)-\left(\hphi_s(0)+\frac{1}{2}\phi_s(0)\right)\right)^{2}. \label{eq: momentInequalityGeneralized no p_j}
\end{align}

%%DELETED
%    &\geq \sum_{\substack{j=\text{max}(c_{\phi_1},\dots, c_{\phi_m}) \\ f \text{ has exactly } j \text{ zeros } \\ \text{at the central point}}}^{\infty}  %\frac{1}{|H_k^{\sigma}(N)|}\left(\prod_{s=1}^m j\phi_s(0)-\left(\hphi_s(0)+\frac{1}{2}\phi_s(0)\right)\right)^{2} \nonumber\\
%    &\geq \sum_{\substack{j=k \\ f \text{ has exactly } j \text{ zeros } \\ \text{at the central point}}}^{\infty}  \frac{1}{|H_k^{\sigma}(N)|}\left(\prod_{s=1}^m %j\phi_s(0)-\left(\hphi_s(0)+\frac{1}{2}\phi_s(0)\right)\right)^{2} \nonumber\\

Similarly, we can apply the same technique that we applied to \eqref{eq: finalBoundSum no p_j} to \eqref{eq: momentInequalityGeneralized no p_j} and obtain 
\begin{align}
    & \left\langle\prod_{s=1}^{m}\left( D(f;\phi_s)-\<D(f;\phi_s)\>_\sigma\right)^{2}\right\rangle_\sigma  \ \geq \ \sum_{j=r}^{\infty}  p_j(N)\left(\prod_{s=1}^m r\phi_s(0)-\left(\hphi_s(0)+\frac{1}{2}\phi_s(0)\right)\right)^{2}.
\end{align}

We explicitly state the upper bounds collected from these equations in the below section.

%%%%%%%%%%%%%%%%%%%%%%%%%%%%%%%%%%%%%%%%%%%%%%%%%%%%%%%%%%%%%%%%%%%%%%%%%%%%%%%%%%%%%%%%%%%%%%%%%%%%%%%%
%%%%%%%%%%%%%%%%%%%%%%%%%%%%%%%%%%%%%%%%%%%%%%%%%%%%%%%%%%%%%%%%%%%%%%%%%%%%%%%%%%%%%%%%%%%%%%%%%%%%%%%%
\section{Table of Numerical Bounds}\label{sec: table of numerical bounds}

\begin{rek}
    The coefficient $2m(2m - 2)$ for even order of vanishing $2m$ comes from the fact that $\gamma_f^{(j_i)} \neq \pm \gamma_f^{(j_j)}$. Thus, after we pick the first zero from the $2m$ zeros, we only have $2m-2$ choices for the second zeros. The coefficient $(2m)^2$ for odd order of vanishing $2m+1$ comes from the fact that $\gamma_f^{(j_i)} \neq \pm \gamma_f^{(j_j)}$ and also the fact that odd order of vanishing is only possible for SO(odd), which has an extra zero $\gamma_f^{(j_0)}$ at the central point; this extra zero needs to be considered separately. There are two cases: case $1$, we don't pick the extra zero as the first zero, and we obtain the following $(2m)(2m-1)$; case $2$, we pick the extra zero as the first zero, and we obtain the following $(1)(2m)$. Summing these mutually independent cases, we obtain our desired $(2m)^2$ for the coefficient for odd vanishing.
\end{rek}
\normalsize
%%%%%%%%%%%%%%%%%%%%%%%%%%%%%%%%%%%%%%%%%%%%%%%%%%%%%%%%%%%%%%%%%%%%%%%%%%%%%%%%%%%%%%%%%%%%%%%%%%%%%%%%
%%%%%%%%%%%%%%%%%%%%%%%%%%%%%%%%%%%%%%%%%%%%%%%%%%%%%%%%%%%%%%%%%%%%%%%%%%%%%%%%%%%%%%%%%%%%%%%%%%%%%%%%
%%%%%%%%%%%%%%%%%%%%%%%%%%%%%%%%%%%%%%%%%%%%%%%%%%%%%%%%%%%%%%%%%%%%%%%%%%%%%%%%%%%%%%%%%%%%%%%%%%%%%%%%
%%%%%%%%%%%%%%%%%%%%%%%%%%%%%%%%%%%%%%%%%%%%%%%%%%%%%%%%%%%%%%%%%%%%%%%%%%%%%%%%%%%%%%%%%%%%%%%%%%%%%%%%

\subsection{SO(even) Bounds}
We present calculated SO(even) bounds below.
%%\FloatBarrier
\begin{table}[H]
    \centering
    \begin{tabular}{|l|l|l|l|}
    \hline
        Order vanish & $1$-level & $2$-level & $4$\textsuperscript{th} centered moment\textsuperscript{*}\\ \hline
        6  & 0.144090 & 0.01576870  & 0.00853841 \\ \hline
        8  & 0.108067 & 0.00788434  & 0.00081336 \\ \hline
        10 & 0.086454 & 0.00473060  & 0.00018684 \\ \hline
        20 & 0.043227 & 0.00105125  & 4.49988$\cdot 10^{-6}$ \\ \hline
        50 & 0.017290 & 0.00015768 & 7.13387$\cdot 10^{-8}$ \\ \hline
    \end{tabular} \small
\caption{Comparison of order of vanishing bounds from various approaches. \\
These are upper bounds for vanishing at least $r$ (number in order vanishing column).\\
For the $1$-level column, we calculated the bound using the optimal $1$-level bound from \cite{ILS}. The support of the Fourier transform of the test function used is $(-2,2)$.\\
For the $2$-level column, we calculated the bound using the optimal $2$-level bound from \cite{BCDMZ}. The support of the Fourier transform of the test functions used is $(-1,1)$.\\
For the $4$\textsuperscript{th} centered moment\textsuperscript{*} column, the \textsuperscript{*} signifies that we used the $4$ copies of the naive test functions $\varphi_{\rm naive}$. The support of the Fourier transform of the test function used is $(-1/3,1/3)$.}
\end{table} \normalsize
%%\FloatBarrier

\normalsize
Appendix A of \cite{ILS} discusses a general result of functional analysis, which is the starting point of the efforts to determine the optimal test function. Gallagher \cite{Ga} proved that the candidate test functions $\phi$ for the optimal test function must be of the form $\phi(z)=|h(z)|^2$, where $h$ is an entire function of exponential type $1$ and $h \in L^2(R)$ (i.e., is square-integrable). One property of Fourier transforms is that the Fourier transform of the convolution of two functions is the product of the Fourier transforms. Using this property, we can rewrite this condition for $\phi$ in terms of $\hphi$:
\begin{equation}
    \hphi(x) \ = \ (g \ast \Tilde{g})(x),
\end{equation}
where $\ast$ represents convolution and
\begin{equation}
    \Tilde{g}(x)\ = \ \overline{g(-x)}.
\end{equation}

\begin{rek}
    We choose to vary $g$ to find a better set of test functions instead of varying $\phi$ and $\phihat$ because the restrictions that $\phi$ must be non-negative and even, and $\phihat$ compactly supported, limit the closed form choices for $\phi$ and $\phihat$ significantly; in other words, it is hard to write down such candidate functions. On the other hand, if we choose $g$ to be even, non-negative and compactly supported, $\phihat$ and $\phi$ will also be even and non-negative, and if the support of $g$ is $(-\omega/2, \ \omega/2)$, the support of $\phihat$ will be $(-\omega, \ \omega)$.
\end{rek}

After testing out various $g$ for the $4$\textsuperscript{th} centered moment, we find that the combination of a pair of $\phi(x)$ generated by $g(x) = \sin(x^2)$ for $|x|<1/8$ and a pair of the naive test function $\varphi_{\rm naive}(x)$ (defined in \ref{def: naive test function}) supported from $(-1/4, \ 1/4)$ yield better results than 4 copies of the naive test function when $n \geq 100$. We emphasize here that this choice of $g$ is most definitely not optimal, but it is nevertheless an improvement from the naive test functions. In particular, this shows that for very high vanishing, the flexibility of using higher moments yields better bounds.

We used Theorem \ref{thm: generalized centered moment} to calculate the naive bounds and Theorem \ref{thm: generalized less support} to calculate the bounds for better test functions because they have different supports.

%%\FloatBarrier
\begin{table}[H]
    \centering
    \begin{tabular}{|l|l|l|l|l|}
    \hline
        %%${{\rm Order} \atop {\rm vanish}} \atop \ $ & ${{\rm 1-level} \atop \ }  \atop \ $  & ${{\rm 2-level} \atop \ }  \atop \ $   & ${{\rm 4th\ centered}\atop {\rm %%moment}\textsuperscript{*}}  \atop \ $ & ${{\rm 4th\ centered} \atop  {\rm moment}\textsuperscript{**}}  \atop \ $ \\ \hline
                Order & \ & \ & 4th centered & 4th centered \\
        vanishing & 1-level & 2-level & moment\textsuperscript{*} & moment\textsuperscript{**} \\
        \hline
        100  & 0.0086454 & 3.86172$\cdot 10^{-5}$ & 3.84617$\cdot 10^{-9}$  & 3.7858$\cdot 10^{-9}$ \\ \hline
        200  & 0.0043227 & 9.55677$\cdot 10^{-6}$ & 2.23711$\cdot 10^{-10}$ & 2.0812$\cdot 10^{-10}$ \\ \hline
        300  & 0.0028818 & 4.23320$\cdot 10^{-6}$ & 4.31557$\cdot 10^{-11}$ & 3.9427$\cdot 10^{-11}$ \\ \hline
        800  & 0.0010806 & 5.92807$\cdot 10^{-7}$ & 8.28694$\cdot 10^{-13}$ & 7.4047$\cdot 10^{-13}$ \\ \hline
        900  & 0.0009606 & 4.68261$\cdot 10^{-7}$ & 5.16340$\cdot 10^{-13}$ & 4.6069$\cdot 10^{-13}$ \\ \hline
        1000 & 0.0008645 & 3.79207$\cdot 10^{-7}$ & 3.38242$\cdot 10^{-13}$ & 3.0144$\cdot 10^{-13}$ \\ \hline
             &           &                   &                    &  \\ \hline
        2020 & 0.0004279 & 9.28398$\cdot 10^{-8}$ & 2.01718$\cdot 10^{-14}$ & 1.7882$\cdot 10^{-14}$ \\ \hline
    \end{tabular} \small
    \caption{Comparison of order of vanishing bounds from various approaches. \\
    These are upper bounds for vanishing at least $r$ (number in order vanishing column). \\
    For the $1$-level column, we calculated the bound using the optimal $1$-level bound from \cite{ILS}. The support of the Fourier transform of the test functions used is $(-2,2)$.\\
    For the $2$-level column, we calculated the bound using the optimal $2$-level bound from \cite{BCDMZ}. The support of the Fourier transform of the test functions used is $(-1,1)$.\\
    For the $4$\textsuperscript{th} centered moment\textsuperscript{*} column, the \textsuperscript{*} signifies that we used the $4$ copies of the naive test functions $\varphi_{\rm naive}$. The support of the Fourier transform of the test functions used is $(-1/3,1/3)$.\\
    For the $4$\textsuperscript{th} centered moment\textsuperscript{**} column, the \textsuperscript{**} signifies that we used $2$ copies of $\phi$ generated by $g(x)=\sin{x^2}$ for $x<1/8$ and $2$ copies of the naive test functions $\varphi_{\rm naive}$. The support of the Fourier transform of the test functions used is $(-1/4,1/4)$.
    }
\end{table} \normalsize
%%\FloatBarrier
\normalsize
We also obtain improved bounds for the SO(odd) group, which is shown in the next section.
%%%%%%%%%%%%%%%%%%%%%%%%%%%%%%%%%%%%%%%%%%%%%%%%%%%%%%%%%%%%%%%%%%%%%%%%%%%%%%%%%%%%%%%%%%%%%%%%%%%%%%%%
%%%%%%%%%%%%%%%%%%%%%%%%%%%%%%%%%%%%%%%%%%%%%%%%%%%%%%%%%%%%%%%%%%%%%%%%%%%%%%%%%%%%%%%%%%%%%%%%%%%%%%%%
%%%%%%%%%%%%%%%%%%%%%%%%%%%%%%%%%%%%%%%%%%%%%%%%%%%%%%%%%%%%%%%%%%%%%%%%%%%%%%%%%%%%%%%%%%%%%%%%%%%%%%%%
%%%%%%%%%%%%%%%%%%%%%%%%%%%%%%%%%%%%%%%%%%%%%%%%%%%%%%%%%%%%%%%%%%%%%%%%%%%%%%%%%%%%%%%%%%%%%%%%%%%%%%%%
\subsection{SO(odd) Bounds} We present calculated SO(odd) bounds below.
\begin{table}[H]
    \centering
    \begin{tabular}{|l|l|l|l|}
    \hline
        Order vanishing & $1$-level & $2$-level & $4$\textsuperscript{th} centered moment\textsuperscript{*} \\ \hline
        5  & 0.22290 & 0.0674429 & 0.0658044 \\ \hline
        7  & 0.15922 & 0.0299746 & 0.0022199 \\ \hline
        9  & 0.12383 & 0.0168607 & 0.0003640 \\ \hline
        19 & 0.05866 & 0.0033305 & 5.77156$\cdot 10^{-6}$ \\ \hline
        49 & 0.02274 & 0.0004683 & 7.77275$\cdot 10^{-8}$ \\ \hline
    \end{tabular}\small
        \caption{Comparison of order of vanishing bounds from various approaches. \\
        These are upper bounds for vanishing at least $r$ (number in order vanishing column).\\
        For the $1$-level column, we calculated the bound using the optimal $1$-level bound from \cite{ILS}. The support of the Fourier transform of the test function used is $(-2,2)$.\\
        For the $2$-level column, we calculated the bound using the optimal $2$-level bound from \cite{BCDMZ}. The support of the Fourier transform of the test functions used is $(-1,1)$.\\
        For the $4$\textsuperscript{th} centered moment\textsuperscript{*} column, the \textsuperscript{*} signifies that we used the $4$ copies of the naive test functions $\varphi_{\rm naive}$. The support of the Fourier transform of the test function used is $(-1/3,1/3)$.}
\end{table} \normalsize
%%\FloatBarrier

%%\FloatBarrier
\begin{table}[H]
    \centering
    \begin{tabular}{|l|l|l|l|l|}
    \hline
        Order & \ & \ & 4th centered & 4th centered \\
        vanishing & 1-level & 2-level & moment\textsuperscript{*} & moment\textsuperscript{**} \\
        \hline
        %Order vanishing & 1-level & 2-level & $4$\textsuperscript{th} centered moment\textsuperscript{*} & $4$\textsuperscript{th} centered moment\textsuperscript{**} \\ %%\hline
        99 & 0.011258 & 0.000112358 & 4.00504$\cdot 10^{-9}$ & 3.95151$\cdot 10^{-9}$ \\ \hline
        199 & 0.00560070 & 2.75249$\cdot 10^{-5}$ & 2.28052$\cdot 10^{-10}$ & 2.12471$\cdot 10^{-10}$ \\ \hline
        299 & 0.00372756 & 1.21513$\cdot 10^{-5}$ & 4.36908$\cdot 10^{-11}$ & 3.99684$\cdot 10^{-11}$ \\ \hline
        399 & 0.00279333 & 6.81224$\cdot 10^{-6}$ & 1.36155$\cdot 10^{-11}$ & 1.23440$\cdot 10^{-11}$ \\ \hline
        799 & 0.00139492 & 1.69454$\cdot 10^{-6}$ & 8.31878$\cdot 10^{-13}$ & 7.44215$\cdot 10^{-13}$ \\ \hline
        899 & 0.00123975 & 1.33815$\cdot 10^{-6}$ & 5.18033$\cdot 10^{-13}$ & 4.62765$\cdot 10^{-13}$ \\ \hline
        999 & 0.00111566 & 1.08342$\cdot 10^{-6}$ & 3.39199$\cdot 10^{-13}$ & 3.02656$\cdot 10^{-13}$ \\ \hline
         &  &  &  &  \\ \hline
        2021 & 0.000551479 & 2.64456$\cdot 10^{-7}$ & 2.01079$\cdot 10^{-14}$ & 1.78466$\cdot 10^{-14}$ \\ \hline
    \end{tabular} \small
    \caption{Comparison of order of vanishing bounds from various approaches. \\
    These are upper bounds for vanishing at least $r$ (number in order vanishing column). \\
    For the $1$-level column, we calculated the bound using the optimal $1$-level bound from \cite{ILS}. The support of the Fourier transform of the test functions used is $(-2,2)$.\\
    For the $2$-level column, we calculated the bound using the optimal $2$-level bound from \cite{BCDMZ}. The support of the Fourier transform of the test functions used is $(-1,1)$.\\
    For the $4$\textsuperscript{th} centered moment\textsuperscript{*} column, the \textsuperscript{*} signifies that we used the $4$ copies of the naive test functions $\varphi_{\rm naive}$. The support of the Fourier transform of the test functions used is $(-1/3,1/3)$.\\
    For the $4$\textsuperscript{th} centered moment\textsuperscript{**} column, the \textsuperscript{**} signifies that we used $2$ copies of $\phi$ generated by $g(x)=\sin{x^2}$ for $x<1/8$ and $2$ copies of the naive test functions $\varphi_{\rm naive}$. The support of the Fourier transform of the test functions used is $(-1/4,1/4)$.
    }
\end{table} \normalsize
%%\FloatBarrier

\normalsize

%%%%%%%%%%%%%%%%%%%%%%%%%%%%%%%%%%%%%%%%%%%%%%%%%%%%%%%%%%%%%%%%%%%%%%%%%%%%%%%%%%%%%%%%%%%%%%%%%%%%%%%%
%%%%%%%%%%%%%%%%%%%%%%%%%%%%%%%%%%%%%%%%%%%%%%%%%%%%%%%%%%%%%%%%%%%%%%%%%%%%%%%%%%%%%%%%%%%%%%%%%%%%%%%%
%%%%%%%%%%%%%%%%%%%%%%%%%%%%%%%%%%%%%%%%%%%%%%%%%%%%%%%%%%%%%%%%%%%%%%%%%%%%%%%%%%%%%%%%%%%%%%%%%%%%%%%%
%%%%%%%%%%%%%%%%%%%%%%%%%%%%%%%%%%%%%%%%%%%%%%%%%%%%%%%%%%%%%%%%%%%%%%%%%%%%%%%%%%%%%%%%%%%%%%%%%%%%%%%%
\section{Future Work}

This paper is the first exploration using higher moments to obtain explicit bounds. A natural extension is to use the sixth and higher moments. As we go higher and higher, we have smaller and smaller support but we have better results for sufficiently high vanishing; the question is where is the cut-off where one moment gives better results than another.

One of the biggest advantages of the Hughes-Miller framework is that we can easily see how new terms arise as we increase support. Thus, future work could also focus on increasing the support of the test function by improving the number theory calculation, in particular how much results improve when we increase support for a fixed level or moment; see \cite{C--} for recent results along these lines.

In addition, we present results specifically for the orthogonal groups; our techniques are also applicable to the symplectic and unitary symmetry groups, and we can obtain similar results.

Finally one could also further optimize the test functions (i.e., obtain better test functions and subsequent improvement in the bounds) through more advanced functional analysis, similar to analysis shown in Appendix A of \cite{ILS} and \cite{BCDMZ}.

%%%%%%%%%%%%%%%%%%%%%%%%%%%%%%%%%%%%%%%%%%%%%%%%%%%%%%%%%%%%%%%%%%%%%%%%%%%%%%%%%%%%%%%%%%%%%%%%%%%%%%%%
%%%%%%%%%%%%%%%%%%%%%%%%%%%%%%%%%%%%%%%%%%%%%%%%%%%%%%%%%%%%%%%%%%%%%%%%%%%%%%%%%%%%%%%%%%%%%%%%%%%%%%%%
%%%%%%%%%%%%%%%%%%%%%%%%%%%%%%%%%%%%%%%%%%%%%%%%%%%%%%%%%%%%%%%%%%%%%%%%%%%%%%%%%%%%%%%%%%%%%%%%%%%%%%%%
%%%%%%%%%%%%%%%%%%%%%%%%%%%%%%%%%%%%%%%%%%%%%%%%%%%%%%%%%%%%%%%%%%%%%%%%%%%%%%%%%%%%%%%%%%%%%%%%%%%%%%%%

\bibliography{biblio}

\ \\ 
\end{document}